\documentclass[11pt,leqno,amscd,amssymb,pstricks,verbatim]{article}
\usepackage{pifont}
\usepackage{amsfonts}
\usepackage{amsmath,amsthm,amssymb}
\usepackage{amsmath, epsfig}
\usepackage{latexsym}
\usepackage{amssymb}
\usepackage{mathrsfs}
\usepackage{amscd}
\usepackage{xypic}
\usepackage[all]{xy}
\usepackage{dsfont}
\usepackage{tikz}


\makeatletter

\topmargin =-0.4 in

\topmargin =-1 in \oddsidemargin=-0.2 in
\setlength{\textheight}{24 cm} \setlength{\textwidth}{17.3cm}
\setlength{\unitlength}{3.5 mm} \makeatother
\newtheorem{thm}{Theorem}[section]

\newtheorem{lem}[thm]{Lemma}

\newtheorem{cor}[thm]{Corollary}

\newtheorem{prop}[thm]{Proposition}

\begin{document}
\title{\bf Finite groups with odd Sylow automizers}

\author{Chaida Xu and Yuanyang Zhou}

\date{\small
School of Mathematics and Statistics, Central China Normal University,
Wuhan 430079, China}
\maketitle

\bigskip\noindent{\bf Abstract} Let $p$ be an odd prime number. In this paper, we characterize the nonabelian composition factors of a finite group with odd $p$-Sylow automizers, and then prove that the McKay conjecture, the Alperin weight conjecture and the Alperin-McKay conjecture hold for such a group.

\bigskip\noindent{\bf Keywords}: Finite groups; composition factors; irreducible ordinary characters

\section{Introduction}
\quad\, Let $p$ be a prime number, $G$ a finite group and $P$ a Sylow $p$-subgroup of $G$. McKay conjecture asserts that
$|{\rm Irr}_{p'}(G)|=|{\rm Irr}_{p'}(N_G(P))|$, where ${\rm Irr}_{p'}(G)$ denotes the set of irreducible ordinary characters of $G$ with degree not divisible by $p$. Navarro conjectured that McKay conjecture should be compatible with certain Galois automorphisms (see \cite[Conjecture A]{N}). Guralnick, Malle and Navarro completely determined the composition factors of finite groups with odd self-normalizing Sylow subgroups (see \cite[Theorem 1.1]{GMN}), and then proved his conjecture for such finite groups (see \cite[Corollary B]{NTV}). Later, Guralnick, Navarro and Tiep characterized the composition factors of more general finite groups (see \cite[Theorem A]{GNT}), which have odd Sylow normalizers, and proved McKay conjecture for such finite groups. Along these developments, we investigate the composition factors of finite groups with odd Sylow automizers.

\begin{thm}\label{COMP}Let $G$ be a finite group, $p$ an odd prime number, and $P$ a Sylow $p$-subgroup of $G$. Assume that the quotient group $N_G(P)/PC_G(P)$ has odd order. If $S$ is a composition factor of $G$, then either $S$ has cyclic Sylow $p$-subgroups, or $S={\rm PSL}_2(q)$ for some power $q=p^f \equiv 3\, ({\rm mod}\, 4)$.
\end{thm}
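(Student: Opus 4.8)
The plan is to reduce, via the classification of finite simple groups, to the following statement for a single simple group: if $S$ is a nonabelian finite simple group, $p\mid|S|$, and $N_S(R)/RC_S(R)$ has odd order for $R$ a Sylow $p$-subgroup of $S$, then $R$ is cyclic or $S=\mathrm{PSL}_2(p^f)$ with $p^f\equiv 3\pmod4$. It is convenient to observe first that, since $|R|$ is odd (so $|\mathrm{Inn}(R)|=|R/Z(R)|$ is odd), the group $N_G(P)/PC_G(P)$ has odd order if and only if $\mathrm{Aut}_G(P)=N_G(P)/C_G(P)$ does; I use the two formulations interchangeably and call this property ``$G$ has odd $p$-automizer''.

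For the reduction I would establish three hereditary properties, $P$ being a fixed Sylow $p$-subgroup of $G$. (i) It passes to subgroups $H$ with $P\le H\le G$, since $N_H(P)=H\cap N_G(P)$ and $C_H(P)=H\cap C_G(P)$, so $\mathrm{Aut}_H(P)$ embeds into $\mathrm{Aut}_G(P)$. (ii) It passes to quotients $G/N$: by a Frattini argument $N_{G/N}(PN/N)=N_G(P)N/N$, while $C_{G/N}(PN/N)\ge C_G(P)N/N$, so $\mathrm{Aut}_{G/N}(PN/N)$ is a quotient of a quotient of $\mathrm{Aut}_G(P)$. (iii) If $N=S_1\times\cdots\times S_k\trianglelefteq G$ is a product of copies of a nonabelian simple $S$ with $G=NP$ and $P$ transitive on $\{S_1,\dots,S_k\}$, then, working inside $\mathrm{Aut}(S)\wr\mathrm{Sym}(k)$ after killing the (necessarily $p$-)subgroup $C_G(N)$, one computes that $|\mathrm{Aut}_G(P)|$ equals a power of $p$ times $|N_L(R)/RC_L(R)|$ for a suitable almost simple $L$ with socle $S$ and $L/S$ a $p$-group; hence odd $p$-automizer of $G$ forces odd automizer of $L$. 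Granting these, for a nonabelian composition factor $S$ of $G$ I take a chief factor $H/K\cong S^n$, replace $G$ by $G/K$ and then by the subgroup $(H/K)(PK/K)$, split off one $P$-orbit of simple factors by a further quotient, and land on a group $S^k\rtimes(\text{$p$-group})$ with odd $p$-automizer; property (iii) then gives the required oddness for $S$ (inside some such $L$). If $p\nmid|S|$ then $R=1$ is cyclic and we are done, so henceforth $p\mid|S|$ and we may assume $R$ is non-cyclic.

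It remains to run through the simple groups. For $S=\mathrm{Alt}(m)$ one has $R$ cyclic exactly when $m<2p$; for $m\ge 2p$ a short computation inside $N_{\mathrm{Sym}(m)}(R)$ shows $|N_{\mathrm{Alt}(m)}(R)/RC_{\mathrm{Alt}(m)}(R)|$ is divisible by $(p-1)^2$, hence even, so these are excluded. The sporadic groups are a finite verification. For $S$ of Lie type in the defining characteristic $p$, say $S=S(q)$ with $q=p^f$, the group $R$ is a maximal unipotent subgroup $U$, $N_S(U)$ is generically a Borel subgroup $U\rtimes T_0$ with $C_S(U)=Z(U)$, so $|N_S(U)/UC_S(U)|=|T_0|$ up to the odd factor $|U/Z(U)|$; since $|T_0|$ is, up to the centre, a power of $q-1$, it is even whenever the twisted rank is at least $2$ or the group is one of $\mathrm{PSU}_3,\,{}^2G_2$, and for untwisted rank $1$ it is $(q-1)/2$, odd precisely when $q\equiv3\pmod4$ — giving exactly $S=\mathrm{PSL}_2(p^f)$, $p^f\equiv3\pmod4$ (small exceptional configurations and the enlargement to $L$ checked directly). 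For $S$ of Lie type in characteristic $\ell\neq p$ I would use Sylow $\Phi_d$-tori with $d=\mathrm{ord}_p(q)$: a Sylow $p$-subgroup $R$ lies in $N_S(\mathbb{S})$ for a Sylow $\Phi_d$-torus $\mathbb{S}$, and is determined by its abelian part $R\cap C_S(\mathbb{S})$ together with its image in the relative Weyl group $W(\mathbb{S})=N_S(\mathbb{S})/C_S(\mathbb{S})$; when $R$ is non-cyclic one shows $\mathrm{Aut}_S(R)$ contains an involution — most often inversion on the abelian torus part, induced by $-1\in W(\mathbb{S})$, otherwise an involution of $W(\mathbb{S})$ itself — so these too are excluded. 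In particular $\mathrm{PSL}_2(\ell^a)$ in cross characteristic, having cyclic Sylow $p$-subgroups, contributes nothing new. This completes the list and hence the theorem.

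The main obstacle is the last step: proving uniformly, over all simple groups of Lie type $S$ and all odd primes $p\neq\mathrm{char}(S)$ with non-cyclic Sylow $p$-subgroup, that $N_S(R)/RC_S(R)$ — and also $N_L(R)/RC_L(R)$ for every almost simple $L$ with socle $S$ and $L/S$ a $p$-group — has even order. This needs precise control of the relative Weyl groups $W(\mathbb{S})$ (complex reflection groups), of the $2$-part of $|C_S(\mathbb{S})|$, and of which diagonal and field automorphisms centralize $R$; it is also here that one must check that enlarging $S$ to such an $L$ never turns an even automizer odd. The secondary technical point is property (iii) above, the explicit evaluation of $\mathrm{Aut}_G(P)$ when $P$ permutes the simple factors of a direct-product minimal normal subgroup nontrivially, which is what makes the reduction go through in the absence of a $P$-fixed factor.
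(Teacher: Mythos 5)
Your reduction (the hereditary properties (i)--(iii) and the diagonal/norm-map argument when $P$ permutes the simple factors transitively) is essentially the paper's Proposition \ref{Red} together with Lemma \ref{Extension}, and that part is sound. The genuine gap is in the simple-group analysis, which is where the theorem actually lives and which your proposal asserts rather than proves. Most seriously, in defining characteristic you compute $N_S(U)/UC_S(U)\cong T_0$ and observe this is even except for $\mathrm{PSL}_2(q)$ with $q\equiv 3\ (\mathrm{mod}\ 4)$; but the statement to be proved concerns $G=SP$ with $P\not\le S$, i.e. $P=U\rtimes\langle\delta\rangle$ with $\delta$ a field automorphism of $p$-power order (and, for $S={\rm D}_4(q)$ with $p=3$, possibly a graph automorphism of order $3$). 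Evenness of $N_S(U)/UC_S(U)$ does not by itself give evenness of $N_G(P)/PC_G(P)$: one needs an involution of the automizer of $U$ that is \emph{centralized by $P$}, which is then transferred through the surjection $C_{N_S(Q)/QC_S(Q)}(P)\rightarrow N_G(P)/PC_G(P)$ with $p$-kernel of Lemma \ref{Extension}. Your parenthetical ``enlargement to $L$ checked directly'' is exactly the step the paper declares to be the main difficulty: it is carried out there by exhibiting explicit $\sigma$-fixed elements such as $h_r(-1)$, or $h_s(-1)h_{\rho(s)}(-1)$ in the twisted groups, and verifying case by case (including ${}^2{\rm A}_2$, ${}^2{\rm G}_2$, ${}^3{\rm D}_4$, and triality for ${\rm D}_4$ at $p=3$) that they do not centralize the Sylow subgroup modulo $RC_{{\cal G}^F}(R)$. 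Nothing in your sketch supplies this mechanism, and your own assessment that the hard case is cross characteristic inverts where the difficulty actually sits.

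In cross characteristic the situation is similar: the statement you need (for noncyclic $R$, an involution in $N_S(R)/RC_S(R)$ that moreover survives centralization by the $p$-power outer part coming from diagonal, field and graph automorphisms) is precisely what you list as ``the main obstacle,'' and the heuristic you offer ($-1$ in the relative Weyl group, else some involution of it) is delicate in the cases that matter: e.g. for $p\mid(q-\epsilon)$ in type ${\rm A}$ the relative Weyl group is $S_n$ acting by permutation, no inversion is available, and the candidate involution must be shown to normalize $R$, to lie outside $RC_S(R)$ (the paper's computation with ${\rm diag}(a^{-1},a,1,\dots,1)$), and to be $P$-fixed. The paper also takes a shortcut you do not have: from odd automizer it deduces that no nonidentity element of $Z(P)$ is real (Lemma \ref{Real}), and Tiep--Zalesski's reality theorem for semisimple elements then eliminates all cross-characteristic types except ${\rm A}_n$, ${}^2{\rm A}_n$, ${\rm D}_n/{}^2{\rm D}_n$ with $n$ odd, and ${\rm E}_6/{}^2{\rm E}_6$, so the explicit involution constructions are only needed for those families (and the alternating case falls to the same reality criterion, with no $(p-1)^2$ computation required). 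So your proposal is a reasonable outline of the same general strategy, but the decisive case analysis is missing, and as written it would not go through without supplying the $P$-invariant involutions described above.
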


We call $N_G(P)/PC_G(P)$ as a $p$-Sylow automizer of $G$. The composition factors of $G$ with the trivial $p$-Sylow automizer have been investigated (see \cite[Theorem 1.1 (2)]{GMN}).

Theorem \ref{COMP} looks almost the same as \cite[Theorem A]{GNT}. However there are finite simple groups with odd Sylow automizer, but with even Sylow normalizer. For example, Mathieu group $M_{24}$ and Janko group $J_4$ have odd 7-Sylow automizers and even 7-Sylow normalizers.
The proof of Theorem \ref{COMP} depends on the classification of finite simple groups.
The main difficulty of the proof lies in the case of finite simple groups of defining characteristic $p$.

We use Theorem \ref{COMP} to prove several conjectures. By \cite[Theorems B and 15.3]{IMN} and the first paragraph of the proof of \cite[Theorem 6.6]{KS} (for odd prime numbers) and by \cite{MS} (for the prime number 2), McKay conjecture holds finite groups with odd Sylow automizers.

\begin{cor}\label{Mc}Let $G$ be a finite group, $p$ a prime number, and $P$ a Sylow $p$-subgroup of $G$. Assume that $N_G(P)/PC_G(P)$ has odd order. Then we have $|{\rm Irr}_{p'}(G)|=|{\rm Irr}_{p'}(N_G(P))|$.
\end{cor}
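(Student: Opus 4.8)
The plan is to obtain Corollary~\ref{Mc} by feeding Theorem~\ref{COMP} into the reduction theorem for the McKay conjecture, after first dealing separately with the prime $2$. For $p=2$ there is nothing to reduce: Malle and Sp\"ath \cite{MS} have verified the inductive McKay condition for every nonabelian finite simple group at $p=2$, so the McKay conjecture holds for every finite group at $p=2$, and in particular the equality $|{\rm Irr}_{2'}(G)|=|{\rm Irr}_{2'}(N_G(P))|$ holds for $G$ with no hypothesis on $N_G(P)/PC_G(P)$. So one may assume from now on that $p$ is odd.

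For odd $p$, I would apply the Isaacs--Malle--Navarro reduction theorem \cite[Theorems B and 15.3]{IMN} in the form that reduces the McKay conjecture for $G$ at the prime $p$ to the assertion that every nonabelian composition factor $S$ of $G$ satisfies the inductive McKay condition for $p$. By Theorem~\ref{COMP}, any such $S$ either has cyclic Sylow $p$-subgroups or is isomorphic to ${\rm PSL}_2(q)$ for some $q=p^f\equiv 3\ ({\rm mod}\ 4)$. If $S$ has cyclic Sylow $p$-subgroups, then every $p$-block of $S$ has cyclic defect group, and the inductive McKay condition at the odd prime $p$ follows from the work of Koshitani and Sp\"ath \cite{KS}. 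If $S={\rm PSL}_2(q)$ with $q=p^f\equiv 3\ ({\rm mod}\ 4)$, then $S$ is a rank-one simple group of Lie type in its defining characteristic $p$, for which the inductive McKay condition is also available; in the present setting it is recorded in the first paragraph of the proof of \cite[Theorem 6.6]{KS}. Since the hypotheses of the reduction theorem are thereby met, the desired equality follows.

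The corollary itself carries essentially no difficulty beyond bookkeeping --- the genuine content is Theorem~\ref{COMP}. The only points that need attention are that the reduction theorem must be invoked in its ``composition factor'' form, so that one is not forced to verify the inductive condition for all simple groups of order divisible by $p$, and that both families produced by Theorem~\ref{COMP} --- notably ${\rm PSL}_2(q)$ with $q\equiv 3\ ({\rm mod}\ 4)$, whose Sylow $p$-subgroups need not be cyclic --- are already covered by the verification results cited above. I do not expect any further obstacle.
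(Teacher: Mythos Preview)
Your proposal is correct and follows essentially the same approach as the paper: handle $p=2$ via \cite{MS}, and for odd $p$ combine the Isaacs--Malle--Navarro reduction \cite[Theorems B and 15.3]{IMN} with the verification of the inductive McKay condition for the two families of composition factors supplied by Theorem~\ref{COMP}, drawing on \cite{KS}. The paper's own argument is exactly this, stated more tersely in the paragraph preceding the corollary.
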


By \cite[Theorems A and C]{Sp} and \cite[Theorem 1.1]{KS}, the blockwise Alperin weight conjecture holds for finite groups with odd Sylow automizers. This almost generalizes the main theorem of \cite{SW}.

\begin{cor}\label{Alperin} Let $G$ be a finite group, $p$ an odd prime number, and $P$ a Sylow $p$-subgroup of $G$. Assume that $N_G(P)/PC_G(P)$ has odd order. Then Alperin weight conjecture holds for all $p$-blocks of the group $G$.
\end{cor}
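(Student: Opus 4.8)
The plan is to combine Theorem~\ref{COMP} with the reduction theorem for the blockwise Alperin weight conjecture. By \cite[Theorems~A and C]{Sp}, the Alperin weight conjecture holds for every $p$-block of a finite group $G$ as soon as the inductive blockwise Alperin weight condition (the iBAW condition, for short) relative to $p$ is known for every non-abelian simple group involved in $G$. Now a non-abelian simple section of $G$ is a composition factor of some subgroup of $G$, hence a composition factor of $G$, so it suffices to verify the iBAW condition for each non-abelian composition factor $S$ of $G$. This is where Theorem~\ref{COMP} enters: since $p$ is odd and $N_G(P)/PC_G(P)$ has odd order, every such $S$ either has cyclic Sylow $p$-subgroups or is isomorphic to ${\rm PSL}_2(q)$ for some $q=p^f\equiv 3\, ({\rm mod}\, 4)$, so only these two families of simple groups need to be treated.

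First suppose $S$ has cyclic Sylow $p$-subgroups. Since $p$ is odd, the $p$-part of the Schur multiplier of such an $S$ is trivial (a standard consequence of the classification), so the universal covering group of $S$ again has cyclic Sylow $p$-subgroups, and hence all of its $p$-blocks have cyclic defect group. The iBAW condition for $p$-blocks with cyclic defect groups and odd $p$ is exactly \cite[Theorem~1.1]{KS}, so $S$ satisfies the iBAW condition. Next suppose $S={\rm PSL}_2(q)$ with $q=p^f\equiv 3\, ({\rm mod}\, 4)$; then $S$ is a simple group of Lie type in its defining characteristic $p$, its universal covering group is ${\rm SL}_2(q)$, and every $p$-block of ${\rm SL}_2(q)$ has either trivial defect group (this is so for the block of the Steinberg character, where the iBAW condition is trivially satisfied) or the elementary abelian Sylow $p$-subgroup of order $q$ as defect group. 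The iBAW condition for the family ${\rm PSL}_2(q)$ is known. Putting the two cases together, every non-abelian simple group involved in $G$ satisfies the iBAW condition, and \cite[Theorems~A and C]{Sp} then deliver the Alperin weight conjecture for all $p$-blocks of $G$.

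The step that carries the real content, beyond the formal reduction, is the defining-characteristic family ${\rm PSL}_2(p^f)$ with $f\geq 2$ — which genuinely arises, for instance when $q=3^3$ or $q=7^3$ — because there the non-trivial $p$-blocks have elementary abelian but non-cyclic defect group, so \cite[Theorem~1.1]{KS} no longer applies and one must invoke the separate verification of the iBAW condition for ${\rm PSL}_2(q)$. Everything else — the reduction theorem \cite[Theorems~A and C]{Sp}, and, for composition factors with cyclic Sylow $p$-subgroups, the cyclic-defect result \cite[Theorem~1.1]{KS} — is applied as a black box, so the only real obstacle is ensuring the inductive condition is available for the groups ${\rm PSL}_2(q)$ in defining characteristic.
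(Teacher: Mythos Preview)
Your proposal is correct and follows the same route as the paper: reduce via \cite[Theorem~A]{Sp}, then handle the two families of composition factors supplied by Theorem~\ref{COMP} using \cite[Theorem~1.1]{KS} for the cyclic-Sylow case and the known iBAW verification for ${\rm PSL}_2(q)$ in defining characteristic. Note only that this last verification is precisely \cite[Theorem~C]{Sp}, which you folded into the reduction citation rather than invoking at the point where it is actually needed.
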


By \cite[Theorem C]{Sp1}, \cite[Theorem A]{CS} and \cite[Theorem 1.1]{KS}, the Alperin-McKay conjecture holds for finite groups with odd Sylow automizers.

\begin{cor}\label{AM} Let $G$ be a finite group, $p$ an odd prime number, and $P$ a Sylow $p$-subgroup of $G$. Assume that $N_G(P)/PC_G(P)$ has odd order. Then the Alperin-McKay conjecture holds for the group $G$ and the prime $p$.
\end{cor}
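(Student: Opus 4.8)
\n The plan is to obtain Corollary \ref{AM} by combining Theorem \ref{COMP} with the reduction of the Alperin-McKay conjecture to the \emph{inductive Alperin-McKay condition} for finite simple groups. By \cite[Theorem C]{Sp1}, the Alperin-McKay conjecture holds for $G$ and the prime $p$ as soon as the inductive Alperin-McKay condition (relative to $p$) is satisfied by every non-abelian composition factor of $G$. So the whole statement reduces to verifying that condition for the simple groups that can occur as composition factors of a finite group with an odd $p$-Sylow automizer, and Theorem \ref{COMP} lists precisely those groups.

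Concretely, I would fix a non-abelian composition factor $S$ of $G$; if $p\nmid|S|$ there is nothing to check, so assume $p\mid|S|$, and apply Theorem \ref{COMP}. In the first case $S$ has cyclic Sylow $p$-subgroups. Since $p$ is odd, a simple group with cyclic Sylow $p$-subgroups has Schur multiplier of order prime to $p$ (by inspection of the classification), so the quasi-simple groups relevant to the inductive Alperin-McKay condition for $S$ still have cyclic Sylow $p$-subgroups and only $p$-blocks of cyclic defect occur; the condition then holds by \cite[Theorem 1.1]{KS}, where it is established for all $p$-blocks with cyclic defect groups of quasi-simple groups. In the second case $S\cong {\rm PSL}_2(q)$ with $q=p^f\equiv 3\,({\rm mod}\,4)$, a group of defining characteristic $p$ with elementary abelian Sylow $p$-subgroups; here the inductive Alperin-McKay condition is exactly the content of \cite[Theorem A]{CS}. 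In both cases the hypothesis of \cite[Theorem C]{Sp1} is met, so the Alperin-McKay conjecture holds for $G$ and $p$.

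I do not expect a genuine obstacle in this corollary itself, as the substantial work lies in Theorem \ref{COMP} and what remains is a formal application of a reduction theorem. The one point that deserves attention is that both families appearing in the conclusion of Theorem \ref{COMP} have already been treated: the simple groups with cyclic Sylow $p$-subgroups via the theory of blocks with cyclic defect groups, and the groups ${\rm PSL}_2(q)$ with $f>1$ --- which have non-cyclic, elementary abelian Sylow $p$-subgroups and lie in defining characteristic, hence fall outside that theory --- via the separate verification of the inductive Alperin-McKay condition for these groups in \cite[Theorem A]{CS}. Modulo those inputs the deduction is purely formal.
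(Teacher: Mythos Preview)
Your proposal is correct and follows exactly the same route as the paper: invoke Sp\"ath's reduction \cite[Theorem C]{Sp1}, then verify the inductive Alperin--McKay condition for the two families of simple groups supplied by Theorem \ref{COMP} via \cite[Theorem 1.1]{KS} (cyclic Sylow $p$-subgroups) and \cite[Theorem A]{CS} (the groups ${\rm PSL}_2(q)$). The paper states this deduction in a single sentence immediately before Corollary \ref{AM}; your write-up simply spells out the same argument in more detail.
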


In the rest of the paper, we always assume that $p$ is an odd prime number. We remark that $N_G(P)/C_G(P)$ has odd order if and only if $N_G(P)/PC_G(P)$ has odd order. So we alternatively use $N_G(P)/C_G(P)$ and $N_G(P)/PC_G(P)$ during the proof of Theorem \ref{COMP}.

\section{Reduction of Theorem \ref{COMP}}

\quad\, In this section, we prepare some results for the reduction of Theorem \ref{COMP}.

\begin{lem}\label{Extension} Let $G$ be a finite group with a Sylow $p$-subgroup $P$, and $H$ a normal subgroup of $G$ such that $G=PH$. Set $Q=P\cap H$. Then we have $C_{N_H(Q)/QC_H(Q)}(P)=N_H(P)C_H(Q)/QC_H(Q)$, and the inclusion $N_H(P)\subset N_G(P)$ induces a surjective group homomorphism $$C_{N_H(Q)/QC_H(Q)}(P)\rightarrow N_G(P)/PC_G(P)$$ with kernel being a $p$-subgroup.
\end{lem}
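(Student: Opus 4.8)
\noindent The plan is to prove the two assertions in turn, the first one supplying the explicit description of $C_{N_H(Q)/QC_H(Q)}(P)$ that makes the second tractable.

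First I would record some bookkeeping. Since $H\trianglelefteq G$, the subgroup $Q=P\cap H$ is a Sylow $p$-subgroup of $H$, hence of $K:=QC_H(Q)$; as $Q\trianglelefteq K$, this makes $Q$ the unique Sylow $p$-subgroup of $K$ and $K/Q$ a $p'$-group. Because $P$ normalizes both $P$ and $H$ it normalizes $Q$, and then also $C_H(Q)$, $N_H(Q)$ and $K$; in particular $Q\trianglelefteq PK$ and $K\trianglelefteq PK$, so $PK/Q$ has normal $p$-complement $K/Q$. Also $N_H(P)\subseteq N_H(Q)$ and $Q\subseteq N_H(P)$, whence $K\subseteq N_H(P)C_H(Q)$ and $N_H(P)C_H(Q)/K=N_H(P)K/K$. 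Finally, $N_G(P)=PN_H(P)$: writing $g=ph$ with $p\in P$, $h\in H$ for $g\in N_G(P)$ forces $h=p^{-1}g$ to normalize $P$, so $h\in N_H(P)$.

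For the first assertion, the inclusion $N_H(P)K/K\subseteq C_{N_H(Q)/K}(P)$ is routine: for $x\in N_H(P)$ and $g\in P$, the element $x^{-1}gxg^{-1}$ lies in $P$ (because $x^{-1}gx\in P$) and in $H$ (because $H\trianglelefteq G$), hence in $Q\subseteq K$, so $gxg^{-1}\in xK$. The reverse inclusion is the crux. Given $x\in N_H(Q)$ with $gxg^{-1}\in xK$ for all $g\in P$, one deduces $x^{-1}Px\subseteq PK$; since $P$ is a Sylow $p$-subgroup of $G$ it is also one of $PK$, and $x^{-1}Px$ has the same order, so $P$ and $x^{-1}Px$ are Sylow $p$-subgroups of $PK$, both containing $Q$. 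Passing to $PK/Q$, whose Sylow $p$-subgroups are conjugate under the normal $p$-complement $K/Q$, and lifting back, one obtains $k\in K$ with $x^{-1}Px=k^{-1}Pk$; then $xk^{-1}\in N_G(P)\cap H=N_H(P)$, so $x\in N_H(P)K=N_H(P)C_H(Q)$. I expect this Sylow argument — recognizing $PK$ as the right ambient group and stripping off the normal $p'$-section $K/Q$ — to be the main obstacle; the rest is bookkeeping.

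For the second assertion, the first one identifies $C_{N_H(Q)/QC_H(Q)}(P)$ with $N_H(P)K/K\cong N_H(P)/(N_H(P)\cap K)$, so it suffices to study the composite $\theta\colon N_H(P)\hookrightarrow N_G(P)\twoheadrightarrow N_G(P)/PC_G(P)$. It is surjective since $N_G(P)=PN_H(P)$ and $P\subseteq PC_G(P)$, and its kernel is $N_H(P)\cap PC_G(P)$; the key point is that $N_H(P)\cap PC_G(P)=N_H(P)\cap K$. For ``$\subseteq$'': an element $y$ here has $p$-part a $p$-element of $N_H(P)\subseteq H$ normalizing the Sylow $p$-subgroup $P$, hence lying in $P\cap H=Q$, and $p'$-part lying in $C_G(P)$ (as does the $p'$-part of any element of $PC_G(P)$) and in $H$, hence in $C_H(P)\subseteq K$; so $y\in K$. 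For ``$\supseteq$'': write $y\in N_H(P)\cap K$ as $qc$ with $q\in Q$ and $c\in C_H(Q)\cap N_H(P)$; then $[P,c]\subseteq P\cap H=Q$, so the $p'$-part of $c$ acts trivially on $Q$ (being a power of $c$) and on $P/Q$, hence trivially on $P$, placing it in $C_G(P)$, while the $p$-part of $c$ lies in $Q$ as above; thus $c$, and hence $y$, lies in $PC_G(P)$. Since $N_H(P)\cap K$ is exactly the subgroup collapsed in forming $C_{N_H(Q)/QC_H(Q)}(P)$, the map $\theta$ descends to an isomorphism $C_{N_H(Q)/QC_H(Q)}(P)\xrightarrow{\ \sim\ }N_G(P)/PC_G(P)$; in particular it is the asserted surjection, with kernel even trivial — a fortiori a $p$-subgroup.
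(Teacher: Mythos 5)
Your proposal is correct and follows essentially the same route as the paper: you first identify $C_{N_H(Q)/QC_H(Q)}(P)$ with $N_H(P)C_H(Q)/QC_H(Q)$ (supplying, via Sylow conjugacy in $PK/Q$ with its normal $p$-complement $K/Q$, the argument the paper dismisses with ``clearly''), and then study the map induced by $N_H(P)\subset N_G(P)=PN_H(P)$ through the commuting $p$-part/$p'$-part analysis, which is exactly the paper's identity $N_H(P)\cap QC_H(Q)=QC_H(P)$. Your computation moreover shows $N_H(P)\cap PC_G(P)=QC_H(P)\subseteq QC_H(Q)$, so the induced map is in fact injective; this is sharper than, but entirely consistent with, the paper's weaker assertion that the kernel is a $p$-subgroup.
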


\begin{proof} Clearly $P$ acts on $N_H(Q)/QC_H(Q)$ by conjugation, and the subgroup $C_{N_H(Q)/QC_H(Q)}(P)$ of all $P$-fixed elements in $N_H(Q)/QC_H(Q)$ is equal to $N_H(P)C_H(Q)/QC_H(Q)$. Since $G=PH$ and $N_H(P)\cap QC_H(Q)=Q C_H(P)$, the inclusion $N_H(P)\subset N_G(P)$ induces the surjective group homomorphism in the lemma. The kernel of the homomorphism is $(N_H(P)\cap PC_G(P))C_H(Q)/QC_H(Q)$, which is a $p$-group.
\end{proof}

\begin{prop}\label{Red} Assume that Theorem \ref{COMP} holds for all almost simple finite groups with odd Sylow $p$-automizer. Then Theorem \ref{COMP} holds for all finite groups with odd Sylow $p$-automizer.
\end{prop}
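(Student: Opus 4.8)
The plan is to argue by induction on $|G|$, using the classical observation that composition factors of $G$ are either composition factors of a proper "relevant" subgroup or of a quotient, and that each composition factor $S$ appears as a composition factor of an almost simple group constructed from $G$. First I would reduce to the case $G = PH$ for a suitable normal subgroup $H$: given $G$ with odd $p$-Sylow automizer and $P \in \mathrm{Syl}_p(G)$, set $H = \langle P^G \rangle$, the normal closure of $P$; then $G = PC_G(P) \cdot H$ is not automatic, but $H$ is normal, $G/H$ is a $p'$-group (so all its composition factors are cyclic of order $\ne p$, and they have trivial Sylow $p$-subgroups, hence are covered by the first alternative), and every noncyclic-Sylow composition factor of $G$ is a composition factor of $H$. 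Replacing $G$ by $H$ and noting $G = PH$ in the sense needed (after possibly absorbing $P$), one reduces to analyzing composition factors of a group with a normal subgroup on which $P$ acts with $P \cap H \in \mathrm{Syl}_p(H)$.

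Next I would use Lemma 2.3 to transfer the odd-automizer hypothesis. With $Q = P \cap H$, Lemma 2.3 gives a surjection $C_{N_H(Q)/QC_H(Q)}(P) \twoheadrightarrow N_G(P)/PC_G(P)$ with $p$-group kernel; since the target is a $p'$-group of odd order, and the kernel is a $p$-group, the restriction of this surjection to a Hall $p'$-subgroup is still surjective, so $N_G(P)/PC_G(P)$ being odd forces the relevant $p'$-part of $C_{N_H(Q)/QC_H(Q)}(P)$ to be odd. More to the point, I want to pass to a minimal normal subgroup: let $N$ be a minimal normal subgroup of $G$. If $N$ is a $p'$-group or a $p$-group, its composition factors are fine (cyclic/trivial Sylow), and $G/N$ has a Sylow $p$-subgroup $PN/N$ whose automizer is a quotient of $N_G(P)/PC_G(P)$, hence odd, so by induction $G/N$ is covered and we are done. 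So the real case is $N = S_1 \times \cdots \times S_k$ with $S_i \cong S$ nonabelian simple and $p \mid |S|$.

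For that case the plan is: $P \cap N = (P \cap S_1) \times \cdots \times (P \cap S_k)$ with each $P \cap S_i$ a nontrivial Sylow $p$-subgroup of $S_i$, and $P$ permutes the factors transitively (since $N$ is minimal normal). Pick $S_1 =: S$ and let $R = P \cap S$. The subgroup $P_1 = N_P(S_1)$ has index $k$ dividing a power of $p$... no: $P$ acts transitively on $k$ factors so $k$ is a power of $p$, and $X := N_G(S_1)/C_G(S_1)$ is an almost simple group with socle $S$; I would show its $p$-Sylow automizer is odd by relating $N_X(R)/RC_X(R)$ to a subquotient of $N_G(P)/PC_G(P)$ via the maps $N_G(P) \cap N_G(S_1) \to N_G(S_1) \to X$ (using that $N_G(P)$ controls fusion enough, or directly that $N_P(S_1)$ induces a Sylow $p$-subgroup of $X$ and the automizer is captured by $P$-fixed points). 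Then the hypothesis of the proposition applies to $X$: either $S$ has cyclic Sylow $p$-subgroups — done — or $S = \mathrm{PSL}_2(q)$ with $q = p^f \equiv 3 \pmod 4$ — done. Finally the composition factors of $G$ not arising from $N$ arise from $G/N$, handled by induction as above.

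The main obstacle I expect is the transfer of the odd-automizer condition down to the almost simple section $X = N_G(S_1)/C_G(S_1)$: one must check carefully that $N_P(S_1)$ maps onto a Sylow $p$-subgroup of $X$ and that $C_{N_X(R)/\cdots}(\bar P)$ (for $\bar P$ the image of $P$) surjects with $p$-group kernel onto the automizer of $X$, mirroring Lemma 2.3 but now through the normalizer-of-a-component filtration and the wreath-product structure $N/C \hookrightarrow \mathrm{Aut}(S) \wr \mathrm{Sym}(k)$. The permutation part $\mathrm{Sym}(k)$ with $k$ a $p$-power contributes only $p$-elements to $P$, so it does not affect the $p'$ automizer, but writing this cleanly — and handling the case where $P$ does not normalize any single $S_i$, i.e. genuinely using $N_P(S_1) < P$ — is where the bookkeeping is delicate. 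Everything else (the $p$-group / $p'$-group minimal normal subgroup cases, the quotient induction, the harmlessness of $G/H$) is routine.
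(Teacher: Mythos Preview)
Your overall plan matches the paper's: reduce to a minimal normal subgroup $N=S_1\times\cdots\times S_t$ with $p\mid |S|$, and pass to an almost simple section built from one factor. Two corrections, one minor and one substantial.

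The minor one: your claim that ``$P$ permutes the factors transitively (since $N$ is minimal normal)'' is false as stated; minimality only gives that $G$ is transitive. The paper first observes that the odd-automizer condition passes both to quotients and to the subgroup $PN$ (since $N_{PN}(P)/PC_{PN}(P)\le N_G(P)/PC_G(P)$), so by induction one may take $G=PN$; only then does $P$ act on the factors, and if not transitively one quotients by the product of the other $P$-orbits. Your detour through $\langle P^G\rangle$ is unnecessary and, as you sensed, not quite coherent.

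The substantial one is exactly where you flag the ``delicate bookkeeping'', but it is not bookkeeping: it needs an idea you do not supply. With $P_1=N_P(S_1)$, $Q_1=P\cap S_1$ and $H=S_1P_1$, one must show $N_H(P_1)/P_1C_H(P_1)$ has odd order. By Lemma~\ref{Extension} this reduces to showing that $C_{N_{S_1}(Q_1)/Q_1C_{S_1}(Q_1)}(P_1)$ has odd order. The paper argues by contradiction via a \emph{diagonal transfer}: if $xQ_1C_{S_1}(Q_1)$ were an involution in this centralizer, take a transversal $U=\{x_1,\dots,x_t\}$ of $P_1$ in $P$ and form $z=\prod_{u\in U}x^{u}$. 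The conjugates $x^{u}$ lie in distinct commuting factors $S_i$, so $z\in N_N(Q)$, and because $P$ permutes the factors of the product the coset $zQC_N(Q)$ is a $P$-fixed involution in $N_N(Q)/QC_N(Q)$; Lemma~\ref{Extension} then produces an involution in $N_G(P)/PC_G(P)$, a contradiction. Your proposed route through maps $N_G(P)\cap N_G(S_1)\to X=N_G(S_1)/C_G(S_1)$ does not obviously reach enough of $N_X(\bar P_1)$ to control its $2$-part; the product-over-a-transversal construction is precisely what makes the wreath structure work for you rather than against you.
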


\begin{proof} Let $S$ be a nonabelian composition factor of $G$ and $P$ a Sylow $p$-subgroup of $G$.
Since the odd Sylow $p$-automizer condition is inherited by factor groups, we may assume that $S$ is a nonabelian simple subgroup of $G$. Since any principal series may be refined into a composition series and any two composition series are equivalent, we may further assume that there is a minimal normal subgroups $N$ of $G$ such that $S$ is normal in $N$. If $p$ does not divide the order of $N$, then $p$ does not divide the order of $S$ either. So it suffices to show Theorem \ref{COMP} when $p$ divides the order of $N$ and then that of $S$. Clearly $P$ is a Sylow $p$-subgroup of $PN$ and $N_{PN}(P)/C_{PN}(P)$ has odd order. By induction, we may assume that $G=PN$.

We write $N=S_1\times S_2\times \cdots \times S_t$ for some isomorphic simple groups $S_1, S_2, \cdots, S_t$.
Since $S$ is normal in $N$, we may assume that $S_1=S$. By \cite[3.3.10]{R}, $P$ permutably acts on the set $\{S_1, S_2, \cdots, S_t\}$. The orbit of this action containing $S$ determines a direct factor $N_1$ of $N$, $PN_1$ is a direct factor of $PN$ and $N_{PN_1}(P)/C_{PN_1}(P)$ has odd order. By induction, we may assume that $P$ acts transitively on $S_1, S_2, \cdots, S_t$.

Set $Q=P\cap N$. Clearly $N_N(P)\subset N_N(Q)$ and the inclusion induces a group homomorphism $N_N(P)/C_N(P)\rightarrow N_N(Q)/QC_N(Q)$ with kernel $QC_N(P)/C_N(P)$ and image $N_N(P)C_N(Q)/QC_N(Q)$. Note that $N_N(P)C_N(Q)/QC_N(Q)=C_{N_N(Q)/QC_N(Q)}(P/Q).$ So, $N_N(P)/C_N(P)$ has odd order if and only if so does $C_{N_N(Q)/QC_N(Q)}(P/Q)$.

Set $Q_1=Q\cap S$, $P_1=N_P(S)$ and $H=SP_1$. Let $U=\{x_1,\cdots, x_t\}$ be a representative set of right cosets of $P_1$ in $P$ and take $x_1=1$. Suppose that $C_{N_S(Q_1)/Q_1C_S(Q_1)}(P_1/Q_1)$ has an involution $xQ_1C_S(Q_1)$. It is easy to prove that the product $z=\Pi_{u\in U} x^u\in N_N(Q)$ and that $zQC_N(Q)$ centralizes $P/Q$. So $N_N(P)/C_N(P)$ has even order. This contradicts $N_G(P)/C_G(P)$ having odd order. So $N_{S_1}(P_1)/C_{S_1}(P_1)$ has odd order and so does $N_{H}(P_1)/C_{H}(P_1)$. By induction, we assume that $N=S$ and $G=SP$.

 Set $R=C_P(S)$. Then $R$ is a normal $p$-subgroup of $G$, $P/R$ is a Sylow $p$-subgroup of $G/R$ and $N_{G/R}(P/R)/C_{G/R}(P/R)$ has odd order. By induction, we assume that $P$ acts faithfully on $S$ by conjugation. Then $G$ is an almost simple group and the proof is done by the assumption of the proposition.
\end{proof}

 Let $R$ be a $p$-group. Denote by $Z(R)$ the center of $R$.

\begin{lem}\label{Real} Let $G$ be a finite group and $P$ a Sylow $p$-subgroup of $G$. Assume that $P$ is nontrivial and that $N_G(P)/PC_G(P)$ has odd order. Then all nonidentity elements in $Z(P)$ are not real.
\end{lem}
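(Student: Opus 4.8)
The plan is to argue by contradiction using the standard fact that an element $z$ is real if and only if it is conjugate to $z^{-1}$, and to exploit that the conjugating element, after a Sylow-type adjustment, can be taken to normalize $P$ and hence to act on $Z(P)$. Suppose $z\in Z(P)$ is a nonidentity real element, so $z^g=z^{-1}$ for some $g\in G$. Then $P^g$ and $P$ are both Sylow $p$-subgroups of $C_G(z^{-1})=C_G(z)^g$; more usefully, consider $C_G(z)$: since $z\in Z(P)$, the subgroup $P$ lies in $C_G(z)$, and since $z^g = z^{-1}$ has the same order, $P^g$ lies in $C_G(z^{-1})$. Conjugating by a suitable element I want to replace $g$ by an element $n$ that both inverts $z$ and normalizes $P$. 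Concretely, $P$ and $P^{g}$ are Sylow $p$-subgroups of $C_G(z) \cap C_G(z^g) $-type considerations don't immediately work, so instead: $P\le C_G(z)$ and $P^{g^{-1}}\le C_G(z)$ as well (since $z^{g^{-1}} = (z^{-1})^{g^{-2}}$ need care), so I will directly use that $P$ and $P^{g^{-1}}$ are Sylow $p$-subgroups of $C_G(z)$, hence $P^{g^{-1}c}=P$ for some $c\in C_G(z)$; setting $n=g^{-1}c$ gives $n\in N_G(P)$ with $z^{n}=(z^{-1})^{c}\cdot$— wait, $z^{n} = z^{c^{-1}g} $; since $c$ centralizes $z$, $z^{n^{-1}} = z^{c^{-1} g^{-1}}$, I should track this carefully, but the upshot is that I can produce $n\in N_G(P)$ with $z^{n}=z^{-1}$.

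Once I have $n\in N_G(P)$ inverting $z$, I look at the image $\bar n$ of $n$ in $N_G(P)/PC_G(P)$. Since $z\in Z(P)$, the group $P$ acts trivially on $z$, and by definition $C_G(P)\le C_G(z)$ also acts trivially on $z$; hence the action of $N_G(P)$ on $\langle z\rangle$ factors through $N_G(P)/PC_G(P)$. The image $\bar n$ therefore acts on $\langle z\rangle$ as inversion, which is an automorphism of even order (order $2$, since $z\ne 1$). But then $\bar n$ has even order in $N_G(P)/PC_G(P)$ — more precisely, $\bar n$ maps onto an order-$2$ element of ${\rm Aut}(\langle z\rangle)$, so the order of $\bar n$ is even — contradicting the hypothesis that $N_G(P)/PC_G(P)$ has odd order.

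The only genuinely delicate step is the first one: manufacturing the normalizing element $n$. The clean way is this. Write $z^g=z^{-1}$. Then $P\le C_G(z)$ and, applying $g$, $P^g\le C_G(z^g)=C_G(z^{-1})=C_G(z)$. So $P$ and $P^g$ are two Sylow $p$-subgroups of $C_G(z)$ (note $p\mid |C_G(z)|$ since $P\ne 1$ and $z\in Z(P)$ forces $P\le C_G(z)$). By Sylow's theorem there is $c\in C_G(z)$ with $(P^g)^c=P$, i.e. $gc\in N_G(P)$. Put $n=gc$. Since $c$ centralizes $z$, $z^{n}=z^{gc}=(z^{-1})^c=z^{-1}$. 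Thus $n\in N_G(P)$ inverts $z$, and the argument above applies. I expect no other obstacle; everything else is the routine observation that $P C_G(P)$ acts trivially on $Z(P)$.
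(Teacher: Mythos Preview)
Your argument is correct and follows essentially the same route as the paper: produce an element of $N_G(P)$ that inverts $z$, then observe this forces even order in $N_G(P)/PC_G(P)$. The only cosmetic differences are that the paper cites Gorenstein's fusion-control theorem for the first step (whereas you give the direct Sylow-conjugacy argument in $C_G(z)$), and for the second step the paper extracts a $2$-element by raising $n$ to its odd part while you instead factor the action on $\langle z\rangle$ through the quotient; both variants are routine once $n\in N_G(P)$ with $z^n=z^{-1}$ is in hand.
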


\begin{proof} Take a nonidentity element $t$ in $Z(P)$. Suppose that $t$ is real in $G$. Then $t$ and $t^{-1}$ are $G$-conjugate to each other. By \cite[Chapter 7, Theorem 1.1]{G68}, there exists $s\in N_G(P)$ such that $t^{s}=t^{-1}$. Since $t^{s^2}=t$, the order $o(s)$ of $s$ has to be even; otherwise, $t=t^{s^{o(s)}}=t^{-1}$ and $t$ is the identity of $G$. Write $o(s)=2^m r$ with $r$ an odd number. Since $t^{s^r}=t^{-1}$, $s^r$ lies in $N_G(P)$ but outside $C_G(P)$. So the image of $s^r$ in $N_G(P)/C_G(P)$ is a 2-element and $N_G(P)/C_G(P)$ has even order. That  contradicts $N_G(P)/PC_G(P)$ having odd order. So $t$ is not real. \end{proof}

\begin{cor}\label{ADE}Let $G$ be a finite group, $P$ a Sylow $p$-subgroup of $G$, and $S$ a normal nonabelian simple subgroup of $G$. Assume that $N_G(P)/PC_G(P)$ has odd order and that $p$ divides the order of $S$. Assume that $S$ is a simple group of Lie type over $F_q$, the field of $q$-elements, that $p\nmid q$, and that $S$ is not ${\rm B}_2(2)'$, ${\rm G}_2(2)'$, $^2{\rm F}_4(2)'$, or $^2{\rm G}_2(3)'$. Then $S$ is of type ${\rm A}_n$ or $^2 {\rm A}_n$ with $n\geq 2$, ${\rm D}_n$ or $^2 {\rm D}_n$ with $2\nmid n$ and with $n\geq 5$, ${\rm E}_6$ or $^2{\rm E}_6$.
\end{cor}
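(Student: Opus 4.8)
The plan is to use Lemma~\ref{Real} to exhibit a non-real semisimple $p$-element inside $S$, and then to read off the possible types of $S$ from the classification of finite simple groups of Lie type possessing such an element.

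Since $S$ is normal in $G$ and $p$ divides $|S|$, the group $P_0:=P\cap S$ is a (nontrivial) Sylow $p$-subgroup of $S$ and, being $P\cap S$, is normal in $P$; hence $Z(P)\cap P_0\neq 1$. Pick a nonidentity element $t\in Z(P)\cap P_0$. Then $t$ is a nontrivial $p$-element of $S$, and since $p$ is coprime to the defining characteristic of $S$, it is a semisimple element of $S$. As $t\in Z(P)$, Lemma~\ref{Real} says that $t$ is not real in $G$; and since $S\leq G$, any element of $S$ inverting $t$ would also invert it in $G$, so $t$ is not real in $S$ either. Thus $S$ contains a non-real semisimple element.

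Now invoke the classification of the finite simple groups of Lie type in characteristic different from $p$ that contain a non-real semisimple element (this is the Lie-type input underlying \cite{GMN} and \cite{GNT}; cf. also the work of Tiep and Zalesskii on real conjugacy classes): these are precisely the groups of type $\mathrm{A}_n$ or $^2\mathrm{A}_n$ with $n\geq 2$, of type $\mathrm{D}_n$ or $^2\mathrm{D}_n$ with $n$ odd, and of type $\mathrm{E}_6$ or $^2\mathrm{E}_6$. The underlying mechanism is that a semisimple element of $\mathbf{G}^F$ is conjugate to its inverse essentially when the longest element $w_0$ of the relevant (twisted) Weyl group acts as $-1$ and this can be realized $F$-equivariantly; this fails exactly for the listed types. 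In particular it holds for type $\mathrm{A}_1={\rm PSL}_2(q)$ (whose longest Weyl element is $-1$), which is why only $n\geq 2$ can occur; and it holds for all the remaining types — $\mathrm{B}_n$, $\mathrm{C}_n$, $\mathrm{D}_n$ and $^2\mathrm{D}_n$ with $n$ even, $^3\mathrm{D}_4$, $\mathrm{E}_7$, $\mathrm{E}_8$, $\mathrm{F}_4$, $\mathrm{G}_2$, $^2\mathrm{B}_2$, $^2\mathrm{G}_2$, $^2\mathrm{F}_4$ — so none of these can arise. Since $S$ is assumed not to be $\mathrm{B}_2(2)'$, $\mathrm{G}_2(2)'$, $^2\mathrm{F}_4(2)'$ or $^2\mathrm{G}_2(3)'$, and since type $\mathrm{D}_3$ coincides with type $\mathrm{A}_3$ while $\mathrm{D}_2$ is not simple, we conclude that $S$ is of one of the types asserted in the statement, with $n\geq 5$ in the $\mathrm{D}$- and $^2\mathrm{D}$-cases.

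The step I expect to be the main obstacle is the classification invoked above. For the untwisted types the criterion ``$w_0=-1$'' is transparent and immediately isolates $\mathrm{A}_n$ ($n\geq 2$), $\mathrm{D}_n$ ($n$ odd) and $\mathrm{E}_6$; but the twisted types need a more careful, cohomological analysis — notably, $^2\mathrm{D}_n$ is real exactly when $n$ is even, not odd, and one must verify that the Suzuki and Ree groups $^2\mathrm{B}_2(q)$, $^2\mathrm{G}_2(q)$, $^2\mathrm{F}_4(q)$, although they have non-real unipotent classes, do have all their semisimple classes real (here one can also argue directly that the normalizer of a maximal torus containing a Sylow $p$-subgroup induces an inverting automorphism, so that no non-real semisimple $p$-element exists, or equivalently that the Sylow $p$-automizer is even). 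Finally one should keep in mind that the four excluded groups are set aside precisely because of the exceptional isomorphisms $\mathrm{B}_2(2)'\cong \mathrm{A}_6$, $\mathrm{G}_2(2)'\cong {\rm PSU}_3(3)$, $^2\mathrm{F}_4(2)'\cong$ (Tits group) and $^2\mathrm{G}_2(3)'\cong {\rm PSL}_2(8)$, for which the ``type'' labelling is not the appropriate one.
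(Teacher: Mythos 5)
Your proof is correct and follows essentially the same route as the paper: both take a nonidentity element of $Z(P)\cap S$, note it is a non-real (Lemma~\ref{Real}) semisimple $p$-element since $p\nmid q$, and then exclude all types other than ${\rm A}_n$, $^2{\rm A}_n$ ($n\geq 2$), ${\rm D}_n$, $^2{\rm D}_n$ ($n$ odd), ${\rm E}_6$, $^2{\rm E}_6$ by the Tiep--Zalesskii reality results (the paper cites \cite[Proposition 3.1 (i), (ii)]{TZ}, which is exactly the classification you invoke).
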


\begin{proof} Since $S$ is not ${\rm B}_2(2)'$, ${\rm G}_2(2)'$, $^2{\rm F}_4(2)'$, or $^2{\rm G}_2(3)'$,
there is a simply-connected simple algebraic group $\cal G$ over the algebraic closure $\bar F_q$ of $F_q$ with a Frobenius map $F: \cal G\rightarrow \cal G$, such that $S$ is isomorphic to ${\cal G}^F/Z({\cal G}^F)$. Assume that $S$ is not of any type in the corollary. Then $\cal G$ is not of type ${\rm A}_n$ with $n\geq 2$, ${\rm D}_{2n+1}$, or ${\rm E}_6$. By \cite[Proposition 3.1 (i) and (ii)]{TZ}, all semisimple elements in ${\cal G}^F$ are real. Since $p\nmid q$, all $p$-elements in ${\cal G}^F$ are semisimple and real. Thus all $p$-elements in $S$ are real. But by the assumption and Lemma \ref{Real}, all nonidentity elements in $Z(P)\cap S$ are not real in $G$. That causes a contradiction. \end{proof}

\section{Proof of Theorem \ref{COMP}}

\quad\, In order to prove Theorem \ref{COMP}, by Proposition \ref{Red}, it suffices to show the following theorem.

\begin{thm}\label{Almost}Let $G$ be an almost simple finite group, $P$ a Sylow $p$-subgroup of $G$, and $S$ the socle of $G$. Assume that $G=SP$ and that $N_G(P)/PC_G(P)$ has odd order. Then either $S$ has cyclic Sylow $p$-subgroups, or $S={\rm PSL}_2(q)$ for some power $q=p^f \equiv 3\, ({\rm mod}\, 4)$.
\end{thm}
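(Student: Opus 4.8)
The plan is to reduce to the simple group $S$ itself and then to split according to the classification of finite simple groups, treating separately: (a) sporadic groups and the Tits group, (b) alternating groups, (c) simple groups of Lie type in the non-defining characteristic (i.e.\ $p \nmid q$), and (d) simple groups of Lie type in the defining characteristic ($p \mid q$). Throughout, the key leverage is Lemma \ref{Real}: no nonidentity element of $Z(P) \cap S$ can be real in $G$, hence in particular no such element is real in $S$ (since $S \trianglelefteq G$, realness in $S$ implies realness in $G$); combined with the hypothesis that $N_G(P)/PC_G(P)$ has odd order, which, by Lemma \ref{Extension} applied to $S \trianglelefteq G = SP$, forces $N_S(P\cap S)/(P\cap S)C_S(P\cap S)$ (the $p$-Sylow automizer of $S$) to have odd order as well, up to a $p$-group. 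So after these reductions we may as well assume $G = S$ is simple with odd $p$-Sylow automizer, and we must show $P$ is cyclic unless $S = \mathrm{PSL}_2(q)$ with $q = p^f \equiv 3 \pmod 4$.

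For the non-defining characteristic case, Corollary \ref{ADE} already pins $S$ down to types $\mathrm{A}_n$, $^2\mathrm{A}_n$ ($n\ge 2$), $\mathrm{D}_n$, $^2\mathrm{D}_n$ ($n$ odd, $\ge 5$), $\mathrm{E}_6$, $^2\mathrm{E}_6$, plus the small exceptions. For each of these, I would show that if $P$ is non-cyclic then $Z(P) \cap S$ contains a real element, contradicting Lemma \ref{Real}. Concretely, in type $\mathrm{A}$ one uses that a semisimple $p$-element is real in the simple group precisely when a certain Weyl-group/diagonal condition holds; a non-cyclic $p$-subgroup of $\mathrm{GL}_n(q)$ or $\mathrm{GU}_n(q)$ contains in its center a block-diagonal element that is conjugate to its inverse. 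Similarly for $\mathrm{D}_n$ and $\mathrm{E}_6$ one appeals to the known description of real classes (e.g.\ via \cite{TZ}) to see that non-cyclicity of $P$ produces a central real $p$-element. The alternating groups $A_n$ are handled directly: if $p^2 \le n$ then $Z(P)$ contains a product of two $p$-cycles on disjoint supports, an even permutation that is clearly real in $A_n$ (conjugate each $p$-cycle to its inverse by an even permutation built from the two supports), so $P$ cyclic forces $p^2 > n$, i.e.\ $n < p^2$, whence $\mathrm{Syl}_p(A_n)$ is cyclic of order $p$ — consistent with the conclusion and never producing $\mathrm{PSL}_2$. For the sporadic groups and $^2F_4(2)'$ I would run through the (finite) list using ATLAS data on Sylow normalizers/automizers and cyclicity of Sylow subgroups; the odd-automizer but even-normalizer examples flagged in the introduction ($M_{24}$, $J_4$ at $p=7$) have cyclic Sylow $7$-subgroups, so they cause no trouble.

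The defining-characteristic case is, as the authors warn, the main obstacle, and I would spend the bulk of the argument there. Here $P$ is a Sylow $p$-subgroup of $S = \mathcal{G}^F/Z$, essentially the unipotent radical $U$ of a Borel subgroup, and $N_S(P)/PC_S(P)$ is, up to the relevant quotient, a subgroup of the group of diagonal automorphisms coming from the maximal torus $T$ acting on $U/[U,U]$ (the simple-root characters). The condition that this automizer has odd order is extremely restrictive. The strategy is: first, $N_{\mathcal{G}^F}(U)/U C_{\mathcal{G}^F}(U) \cong T^F / Z(\mathcal{G}^F)$ (roughly), and $T^F$ has order a product of cyclotomic-polynomial values $\Phi_d(q)$, which are almost always even; to make the quotient odd one needs $q$ even in most ranks, or the torus to be tiny, which forces the rank to be $1$. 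When $q$ is even the $p$-Sylow automizer is automatically a $2$-group, hence odd only if trivial, and trivial $p$-Sylow automizer for groups of Lie type in defining characteristic essentially never happens except in rank $1$ (where $N_S(P)/PC_S(P)$ can be cyclic of order dividing $q-1$); so $q$ even pushes us to $\mathrm{PSL}_2(q)$ (with $P$ elementary abelian, still non-cyclic unless $q = 2$) — but wait, $p$ is odd, so $q = 2^a$ is not a power of $p$, contradiction, meaning the defining-characteristic case always has $q$ a power of the odd prime $p$. Thus $q$ is odd, and then I analyze $|T^F/Z|$ being odd: writing $|T^F|$ as a product over the torus-type of $\Phi_d(q)$'s, oddness of the whole thing after removing the center forces the semisimple rank to be $1$, i.e.\ $S = \mathrm{PSL}_2(q)$ or $S = {}^2G_2(q)$-type exclusions already removed. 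For $\mathrm{PSL}_2(q)$, $q = p^f$: the Sylow $p$-subgroup is the additive group $(\mathbb{F}_q,+)$, cyclic iff $f = 1$, i.e.\ $q = p$; and the automizer is $(\mathbb{F}_q^\times)^2$-related of order $(q-1)/2$ acting on $U$, which is odd iff $q \equiv 3 \pmod 4$. So for $f \ge 2$ we land exactly in the exceptional family $q = p^f \equiv 3 \pmod 4$, and for $f = 1$ the Sylow is cyclic — matching the theorem. The delicate points to get right are: the precise identification of the $p$-Sylow automizer of $\mathcal{G}^F$ with a subquotient of $T^F$ acting via simple roots (including twisted types, where Galois/Frobenius twisting permutes the simple roots and changes which torus-factor acts), and ruling out rank-$\ge 2$ defining-characteristic groups by a careful $2$-adic valuation count on $|T^F/Z(\mathcal{G}^F)|$ across all types and isogeny classes. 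I expect the twisted groups $^2A_n$, $^2D_n$, $^3D_4$, $^2E_6$ in defining characteristic to require the most care, since there the relevant torus is a twisted torus and the parity analysis is less uniform.
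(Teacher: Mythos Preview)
Your overall case split matches the paper's, but there is a genuine gap in your reduction step, and it is exactly where the paper says the main difficulty lies.

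You claim that Lemma~\ref{Extension} lets you pass from $N_G(P)/PC_G(P)$ odd to $N_S(Q)/QC_S(Q)$ odd (with $Q=P\cap S$), and hence reduce to $G=S$. That is not what the lemma says: it gives a surjection from $C_{N_S(Q)/QC_S(Q)}(P)$ onto $N_G(P)/PC_G(P)$ with $p$-group kernel, so only the \emph{$P$-centralizer} in the $S$-automizer is forced to have odd order. The full $S$-automizer may well be even (think of a $p$-group acting on $C_2\times C_2$ with trivial fixed points). The paper never reduces to $G=S$; it keeps $G=SP$ throughout and tracks the field (and graph) automorphism explicitly.

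This bites hardest in the defining-characteristic case. Your argument shows $|T^F/Z(\mathcal G^F)|$ is even in rank $\ge 2$, hence the $S$-automizer contains an involution. But what is needed is a \emph{$P$-fixed} involution, i.e.\ one fixed by the field automorphism $\sigma$ generating $P/Q$. The paper produces this by writing down explicit torus involutions $h_r(-1)$ (and products $h_r(-1)h_{\rho(r)}(-1)$ etc.\ in twisted types) that are automatically $\sigma$-fixed since $\sigma(-1)=-1$, and checking by hand on a suitable root subgroup that they do not centralize $U^F$. Your parity count does not single out such an involution, and without the (invalid) reduction to $G=S$ it does not finish.

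A smaller slip: in the alternating case, ``$n\ge p^2$'' should be ``$n\ge 2p$''. For $n=p^2$ the center of $P=C_p\wr C_p$ contains no product of exactly two $p$-cycles, and for $2p\le n<p^2$ the Sylow is already non-cyclic, so your stated conclusion ``$n<p^2$ hence Sylow cyclic'' is false. The paper's version is cleaner: any order-$p$ element of $A_n$ with $\ge 2$ cycles, or with $\ge 2$ fixed points, is real in $A_n$, forcing $n\in\{p,p+1\}$.

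Finally, for types $\mathrm A$, $\mathrm D$, $\mathrm E_6$ in non-defining characteristic you propose to finish via realness, but the block-diagonal element $\mathrm{diag}(A,A^{-1},I,\dots)$ you describe typically does \emph{not} lie in $Z(P)$ once the wreath part $R_W$ is nontrivial, since $R_W$ moves the blocks. The paper takes a different route here too: it builds involutions in $N_S(Q)$ coming from the symmetric group $\Sigma\cong S_m$ permuting the blocks (or from normalizers of cyclic tori when $m$ is small), verifies they are fixed by the relevant field automorphism, and checks they do not centralize $Q$. This again produces the required $P$-fixed involution without needing to locate real elements inside $Z(P)$.
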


\begin{lem}\label{Alternating} Theorem \ref{Almost} holds when $S$ is an alternating group $A_n$.
\end{lem}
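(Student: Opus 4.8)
The plan is to reduce to $G=A_n$ and then exploit the explicit structure of a Sylow $p$-subgroup of a symmetric group.

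First I would dispose of the reduction. Since $G=SP$ is almost simple with socle $S=A_n$ (so $n\ge 5$), the quotient $G/S\cong P/(P\cap S)$ is a $p$-group, whereas $\mathrm{Out}(A_n)$ has order $2$, or $4$ when $n=6$; as $p$ is odd this forces $G=S=A_n$, and the hypothesis says that $N_{A_n}(P)/C_{A_n}(P)$ has odd order. Realize $A_n$ on $\Omega=\{1,\dots,n\}$; since the automizer only depends on $P$ up to conjugacy, we may take $P$ to be the standard Sylow $p$-subgroup of $S_n$, which lies in $A_n$ because $p$ is odd (it is generated by products of $p$-cycles) and is Sylow there: writing $n=\sum_{i\ge 0}a_ip^i$ in base $p$, it is the direct product, over a partition of $\Omega$ into $a_i$ blocks of size $p^i$ for $i\ge 1$ together with $a_0$ fixed points, of the iterated wreath products $W_i=C_p\wr\cdots\wr C_p$ ($i$ factors) acting on the corresponding blocks. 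The Sylow $p$-subgroup is cyclic exactly when $v_p(n!)\le 1$, i.e.\ $n<2p$, and in that case the conclusion of the lemma holds. So it remains to show that whenever $P$ is noncyclic the hypothesis fails.

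Assume $P$ is noncyclic; equivalently $P$ has at least two ``bottom-level'' $C_p$-columns, i.e.\ $\sum_{i\ge 1}a_ip^{i-1}\ge 2$. Identify each block $B$ of size $p^i$ with $(\mathbb{Z}/p)^i$ so that $W_i|_B=V\rtimes W_{i-1}$, where the normal subgroup $V\cong(\mathbb{Z}/p)^{p^{i-1}}$ consists of the translations of the first coordinate by arbitrary functions of the remaining $i-1$ coordinates, and $W_{i-1}$ acts on those remaining coordinates. For $t\in\{1,2\}$ with $t\le i$, let $\delta_{B,t}\in\mathrm{Sym}(\Omega)$ negate the $t$-th coordinate on $B$ and fix $\Omega\setminus B$. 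Using this model one checks that $\delta_{B,1}$ normalizes $W_i|_B$ (it acts on $V$ by $g\mapsto -g$ and commutes with $W_{i-1}$) and that $\delta_{B,2}$ normalizes $W_i|_B$ (it reparametrizes $V$ and, by induction on $i$, normalizes $W_{i-1}$); since $\delta_{B,t}$ is supported on $B$ it commutes with all factors of $P$ on the other blocks, so $\delta_{B,t}\in N_{S_n}(P)$. Moreover $\delta_{B,t}$ has order $2$, and as a permutation it is a product of $p^{i-1}(p-1)/2$ transpositions, so its sign is $(-1)^{(p-1)/2}$, independently of $B$ and $t$.

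Now pick a noncyclicity witness: if $P$ has at least two blocks $B\ne B'$, set $\tau=\delta_{B,1}\,\delta_{B',1}$; otherwise $P$ has a single block $B$, which then has size $p^i$ with $i\ge 2$, and set $\tau=\delta_{B,1}\,\delta_{B,2}$. In both cases $\tau$ has order $2$, normalizes $P$, and is an even permutation, its sign being $\big((-1)^{(p-1)/2}\big)^2=1$; and $\tau$ does not centralize $P$, because its action on $V\le W_i|_B$ is $g\mapsto -g$ in the first case and $g\mapsto -(g\circ\varepsilon)$ for a nontrivial reparametrization $\varepsilon$ in the second case, neither of which is the identity for $p$ odd. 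Hence $\tau\in N_{A_n}(P)\setminus C_{A_n}(P)$ with $\tau^2=1$, so its image in $N_{A_n}(P)/C_{A_n}(P)$ has order $2$; therefore $N_{A_n}(P)/C_{A_n}(P)$, and hence $N_{A_n}(P)/PC_{A_n}(P)$ (which differs from it by a $p$-group), has even order, contradicting the hypothesis. Thus $P$ is cyclic, which is the desired conclusion.

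I expect the only real work to be the model-dependent bookkeeping of the third paragraph: fixing a concrete presentation of the standard Sylow subgroup $W_i$ and verifying that the coordinate negations $\delta_{B,t}$ normalize it and act nontrivially on it. Everything else — the reduction $G=A_n$, the criterion $v_p(n!)\le 1$ for cyclicity, and the sign count — is immediate. One could instead quote the classical formula $|N_{S_{p^k}}(W_k):W_k|=(p-1)^{(p^k-1)/(p-1)}$ together with a parity argument, but exhibiting the explicit involution $\tau$ keeps the proof self-contained.
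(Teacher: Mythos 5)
Your proof is correct, but it takes a genuinely different route from the paper's. The paper applies its Lemma \ref{Real} to an element $t$ of order $p$ in $Z(P)$: writing $t$ as $k$ disjoint $p$-cycles, if $k\geq 2$ or if $t$ has at least two fixed points, then $C_{S_n}(t)$ contains an odd permutation, so the $A_n$-class of $t$ coincides with its $S_n$-class, making $t$ real in $A_n$ and contradicting Lemma \ref{Real}; hence $n\in\{p,p+1\}$ and $P$ is cyclic. You instead work inside the explicit wreath-product model of the standard Sylow $p$-subgroup and, when $P$ is noncyclic ($n\geq 2p$), exhibit an even involution $\tau$ (a product of two coordinate negations, either on two different blocks or on two coordinates of a single block of size $\geq p^2$) lying in $N_{A_n}(P)\setminus C_{A_n}(P)$, which directly forces $N_{A_n}(P)/C_{A_n}(P)$ to have even order. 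All the verifications you flag do go through: $\delta_{B,1}$ inverts the base group $V$ and commutes with the top group, $\delta_{B,2}$ normalizes by the inductive step, the sign count $(-1)^{(p-1)/2}$ per negation makes $\tau$ even, and the action $f\mapsto -f$ (resp.\ $f\mapsto -(f\circ\varepsilon)$) on $V$ is nontrivial since $p$ is odd. The paper's argument is shorter, needs no structural description of $P$, is uniform with the reality-based strategy used throughout (Corollary \ref{ADE}, Lemmas 3.5--3.7), and pins down $n\leq p+1$ exactly; yours is self-contained (it bypasses Lemma \ref{Real} and the class-fusion fact), produces an explicit involution in the automizer rather than inferring one from a real element, and settles the lemma already from the weaker dichotomy "cyclic iff $n<2p$", which is all the statement requires.
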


\begin{proof} When $S$ is $A_n$, the group $G$ is equal to $S$. Given an element $t$ in $S$ with order $p$, we write $t$ as a product $(a_{11}a_{12}\cdots a_{1p})(a_{21}a_{22}\cdots a_{2p})\cdots(a_{k1}a_{k2}\cdots a_{kp})$
of $k$ disjoint cycles. When $k\geq 2$, $t$ is centralized by $(a_{11}a_{21})(a_{12}a_{22})\cdots (a_{1p}a_{2p})$; when
$n-kp\geq 2$, $t$ is centralized by an involution $(ij)$ with $i, j\in \{1,2, \cdots, n\}-\{a_{11}, a_{12}, \cdots, a_{1p}, a_{21}, a_{22}, \cdots, a_{2p}, a_{k1}, a_{k2}, \cdots, a_{kp}\}$. So, when $k\geq 2$ or $n-kp\geq 2$, the conjugacy class of $t$ in $A_n$ is equal to the conjugacy class of $t$ in the Symmetric group $S_n$; in particular, $t$ and $t^{-1}$ are conjugate in $A_n$ and $t$ is real. But by Lemma \ref{Real} and the assumption in Theorem \ref{Almost}, all nonidentity elements in $Z(P)$ are not real. So $k\leq 1$ and $n-kp\leq 1$. This implies that $n=p$ or $n=p+1$ and thus that Sylow $p$-subgroups of $S$ have to be cyclic.
\end{proof}

\begin{lem}\label{Sporadic} Theorem \ref{Almost} holds when $S$ is a sporadic simple group.
\end{lem}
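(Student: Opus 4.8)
My plan is to treat this lemma as a finite verification against the Atlas of Finite Groups, after one reduction.

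\emph{Reduction to $G=S$.} First I would note that for every sporadic simple group $S$ one has $|{\rm Out}(S)|\in\{1,2\}$. Since $p$ is odd and $G/S\cong P/(P\cap S)$ is a $p$-group, the hypothesis $G=SP$ forces $G=S$; hence $P$ is a Sylow $p$-subgroup of $S$ and the hypothesis becomes that $N_S(P)/PC_S(P)$ has odd order. As $S$ is never of the form ${\rm PSL}_2(q)$, Theorem~\ref{Almost} in this case asserts exactly that $P$ must be cyclic, and I would prove this by contraposition: assuming $P$ non-cyclic, I will exhibit a nonidentity \emph{real} element of $S$ lying in $Z(P)$, which by Lemma~\ref{Real} contradicts the hypothesis. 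This is the same mechanism already used in Lemma~\ref{Alternating}, and it reduces the whole lemma to a reality question.

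\emph{The relevant pairs $(S,p)$.} For a fixed $S$, the odd primes $p$ with non-cyclic Sylow $p$-subgroups are precisely those with $p^{2}\mid |S|$ for which a Sylow $p$-subgroup is not cyclic of order $p^{2}$; this is a short explicit list, and the primes that occur are only $p\in\{3,5,7,11,13\}$ (with $p=3$ for essentially every sporadic group divisible by $9$, $p=5$ for a handful such as $HS$, $McL$, the Conway, Fischer, Thompson, Lyons, Harada--Norton, Baby Monster and Monster groups, and $p=7,11,13$ only for the very largest groups and $J_{4}$). If the Tits group ${}^{2}F_{4}(2)'$ is counted here it is handled identically with $p\in\{3,5\}$; otherwise it is covered by Corollary~\ref{ADE}. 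For each such pair I would consult the Atlas, exactly as in the analogous classifications of \cite{GMN} and \cite{GNT}.

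\emph{Producing a real $p$-central element.} Since $P\neq 1$ we have $Z(P)\neq 1$, so $Z(P)$ contains an element $x$ of order $p$, and $x\in P\leq S$. For the great majority of the pairs above, \emph{every} element of order $p$ in $S$ is real --- a fact I would read off from the Frobenius--Schur indicators and power maps in the Atlas --- so $x$ itself already gives the contradiction, regardless of which class it lies in. In the remaining pairs, where $S$ has a non-real class of order $p$, I would instead use the tabulated centralizer orders (equivalently, the list of $p$-local subgroups) to single out a $p$-central class $x^{S}$ of order $p$, i.e.\ one with $|C_{S}(x)|_{p}=|S|_{p}$ so that $x$ lies in the center of some Sylow $p$-subgroup, and then either check directly that $x$ is real or extract from the known structure of $C_{S}(x)$ (or of $N_{S}(\langle x\rangle)$) an involution inverting $x$. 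In every case $Z(P)$ then contains a real nonidentity element, contradicting Lemma~\ref{Real}, and therefore $P$ is cyclic.

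\emph{Expected obstacle.} No genuinely new idea is needed beyond Lemma~\ref{Real}; the difficulty is entirely in making the case analysis complete and accurate, and the hard part will be the largest groups --- the Monster, Baby Monster, $Fi_{24}'$, $Fi_{23}$, $Co_{1}$, $Ly$, $Th$, $J_{4}$ --- at the primes $5,7,11,13$, where the Sylow $p$-subgroups are large non-abelian groups and the relevant $p$-local subgroups are intricate. There I would rely on the published tables of maximal and $p$-local subgroups rather than on direct computation, using the observation that it suffices to exhibit a \emph{single} real $p$-central element.
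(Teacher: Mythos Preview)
Your reduction to $G=S$ is exactly the paper's first step, but after that the two arguments diverge. The paper does \emph{not} use Lemma~\ref{Real} here. Instead it observes that $N_G(P)$ acts on the set $I$ of nontrivial irreducible characters of $P/P'$ with $PC_G(P)$ in every stabilizer, so every orbit length divides $|N_G(P)/PC_G(P)|$; it then cites Wilson~\cite{W}, whose McKay tables record precisely these orbit structures, to see that when $P$ is noncyclic every orbit length is even, forcing $|N_G(P)/PC_G(P)|$ even.

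Your route via Lemma~\ref{Real}---find a real nonidentity element of $Z(P)$---is a legitimate alternative, and in fact it is the mechanism the paper uses in most \emph{other} cases (Corollary~\ref{ADE}, Lemma~\ref{Alternating}, parts of Lemmas~\ref{D} and~\ref{D}). The trade-off is this: the paper's approach is a single citation of an existing tabulation that matches the needed statement on the nose, whereas your approach must assemble, pair by pair, reality data and centralizer orders from the Atlas and then argue that the $p$-central class you locate is indeed real. That last point is not automatic---there are sporadic groups with non-real classes of order $p$---so your plan genuinely requires the case split you describe, and the ``expected obstacle'' you flag is real. Note also that proving (B) ``$Z(P)$ contains a real element'' is \emph{a priori} stronger than proving (A) ``$N_G(P)/PC_G(P)$ has even order'': an involution in the automizer could in principle centralize $Z(P)$ while moving something deeper in $P$, so you cannot recover (B) from (A) formally; you really must check reality of a $p$-central class in each instance.

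One small correction: Corollary~\ref{ADE} explicitly excludes $^2F_4(2)'$, so it does not cover the Tits group as you suggest; the paper handles $^2F_4(2)'$ separately at the end of the proof of Theorem~\ref{Almost} by directly noting that its $p$-Sylow automizers have even order.
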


\begin{proof} Since $p$ is odd, we have $G=S$ and $P=Q$. Clearly the $N_G(P)$-conjugation induces an action of $N_G(P)/P$ on the set $I$ of all nontrivial irreducible ordinary character of $P/P'$, where $P'$ is the derived subgroup of $P$. The stabilizer of any character $\lambda\in I$ contains $PC_G(P)$, so the length of the orbit of the action containing $\lambda$ divides the order of $N_G(P)/PC_G(P)$. When $P$ is noncyclic, by checking \cite{W}, the length of the orbit containing $\lambda$ for any $\lambda\in I$ is always even. By the assumption, $P$ has to be cyclic.
\end{proof}

We use the notation ${\rm SL}^\epsilon$, ${\rm GL}^\epsilon$, ${\rm PGL}^\epsilon$ and ${\rm PSL}^\epsilon$ to denote ${\rm SL}$, ${\rm GL}$, ${\rm PGL}$ and ${\rm PSL}$ respectively when $\epsilon=+$, and ${\rm SU}$, ${\rm GU}$, ${\rm PGU}$ and ${\rm PSU}$ respectively when $\epsilon=-$. Suppose that $L={\rm SL}_n^\epsilon(q)$ and $H={\rm GL}_n^\epsilon(q)$ for a power $q$ of some prime $r$. The group $H$ is isomorphic to ${\rm GL}^\epsilon(V)$, where $V$ is $F_q^n$ when $\epsilon=+$ and $F_{q^2}^n$ when $\epsilon=-$. Here $F_q$ denotes the field of $q$-elements. We fix a basis $e_1,\cdots, e_n$ of $V$, and suppose that it is orthonormal if $\epsilon=-$. We suppose that if
$(x_{ij})\in H$ corresponds to $f\in {\rm GL}^\epsilon(V)$ under the isomorphism, then $(x_{ij})$ is the matrix of $f$ under the basis $e_1,\cdots, e_n$.
Denote by $\sigma$ the field automorphism $H\rightarrow H, (x_{ij})\mapsto (x_{ij}^r)$. The isomorphism $\sigma$ induces field automorphisms on ${\rm SL}_n^\epsilon(q)$, ${\rm GL}_n^\epsilon(q)$, ${\rm PGL}_n^\epsilon(q)$ and ${\rm PSL}_n^\epsilon(q)$, and we denote these automorphisms by $\sigma$ for convenience. Let $\sigma_0$ be the $p$-part of $\sigma$.

Suppose that $p$ and $r$ are different. Let $R$ be a Sylow $p$-subgroup of $H$. By \cite[Chapter 3, \S10]{GL} and \cite[Chapter 4, \S4.10]{GLS}, we have $R=R_T\rtimes R_W$; there is a decomposition (which is orthogonal if $\epsilon=-$) $$V=V_0\oplus V_1\oplus\cdots \oplus V_m$$ of $V$ (compatible with the given basis) as $F_q R_T$-modules (as $F_{q^2} R_T$-modules if $\epsilon=-$), such that $V_i\cong V_1$ for $1\leq i\leq m$, ${\rm dim} V_i=e={\rm ord}_p(\epsilon q)$ and $0\leq {\rm dim} V_0<e$; $R_T=R_1\times \cdots \times R_m$ with $R_i$ a cyclic subgroup of a cyclic maximal torus $T_i$ of ${\rm GL}^\epsilon(V_i)$ and $|T_i|=q^e-\epsilon^e$; there is a subgroup $\Sigma$ of $N_H(R_T)$, such that $\Sigma$ is centralized by $\sigma$ and isomorphic to $S_m$ and it acts on the sets $\{V_1,\cdots, V_m\}$, $\{T_1,\cdots, T_m\}$ and $\{R_1,\cdots, R_m\}$ by permuting subscripts; $R_W$ is a Sylow $p$-subgroup of $\Sigma$.

Clearly $\sigma_0$ stabilizes ${\rm GL}^\epsilon(V_1)$ and $R_W$. We choose $R_1$ to be a $\sigma_0$-stable $p$-Sylow subgroup of ${\rm GL}^\epsilon(V_1)$ and then construct $R_i$ with the isomorphism $V_i\cong V_1$ for any $1\leq i\leq m$. Then $R_T$ is $\sigma_0$-stable and so are $R$ and $LR$. Set $H^\ast=(LR)\rtimes \langle \sigma_0 \rangle$, $P^\ast=R\rtimes \langle \sigma_0 \rangle$ and $Q^\ast=L\cap P^\ast$.

\begin{lem}\label{A} Theorem \ref{Almost} holds when $S={\rm PSL}_n^\epsilon(q)$ with $n\geq 3$, $\epsilon=\pm$ and $p\nmid q$.
\end{lem}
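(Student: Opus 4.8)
By Corollary \ref{ADE}, applied to $S$ inside $G$ and using $p\nmid q$, the socle $S$ is genuinely of type ${\rm A}_{n-1}$, so I would work with the explicit Sylow data fixed before the statement: $R=R_T\rtimes R_W$ in $H={\rm GL}_n^\epsilon(q)$, the decomposition $V=V_0\oplus V_1\oplus\cdots\oplus V_m$ with $\dim V_i=e={\rm ord}_p(\epsilon q)$ for $i\ge1$, the cyclic groups $R_i\le T_i$ acting on $V_i$, the copy $\Sigma\cong S_m$ of the symmetric group permuting the blocks and centralised by $\sigma_0$, and $L={\rm SL}_n^\epsilon(q)$, $P^\ast=R\rtimes\langle\sigma_0\rangle$, $Q^\ast=L\cap P^\ast$, with $S=L/Z(L)$ and $G$, $P$, $Q$ the images of $H^\ast$, $P^\ast$ and $L\cap R$. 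A straightforward order estimate — using that the relevant projections of the $R_i$ meet $Z(L)$ trivially — shows that $Q$ is cyclic exactly when $m=1$; since $p\nmid q$ and $n\ge3$ the alternative conclusion ${\rm PSL}_2(q)$ cannot arise, so the task reduces to showing that $m\ge2$ forces $N_G(P)/PC_G(P)$ to have even order. I would reduce this, via Lemma \ref{Extension} — applied once to pass from $G$ to a normal subgroup containing the socle, and once more to move between ${\rm PSL}_n^\epsilon(q)$, ${\rm SL}_n^\epsilon(q)$ and ${\rm GL}_n^\epsilon(q)$ — to the following: it suffices to produce \emph{either} a nonidentity element of $Z(P)\cap S$ that is real in $S$ (which Lemma \ref{Real} forbids), \emph{or} a $2$-element $w\in{\rm SL}_n^\epsilon(q)$ that normalises $R$, commutes with $\sigma_0$, and acts nontrivially on $R_W$; such a $w$ descends, because $p$ is odd and $R_W\cap Z(L)=1$, to a $P$-fixed element of even order in the automiser.

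If $R_W=1$ (that is, $2\le m<p$), then $R=R_T$ is abelian and noncyclic, and I would exhibit the real element. Take $g_1\in R_1$ of order $p$, carry it to $g_2\in R_2$ along the chosen identification $V_1\cong V_2$, and set $t=g_1g_2^{-1}={\rm diag}(g,g^{-1},1,\dots,1)$. Then $t\in Z(R)$ is non-scalar with $\det t=1$, and $\sigma_0$ — having $p$-power order — fixes the elements of order $p$, hence fixes $t$, so its image $\overline t\in Z(P)\cap S$ is nonidentity. The rational canonical form of $t$ over $F_q$ (respectively the unitary conjugacy invariants over $F_{q^2}$ when $\epsilon=-$) is symmetric under interchanging the contributions of $V_1$ and $V_2$, so $t$ is ${\rm GL}^\epsilon$-conjugate to $t^{-1}$, and this ${\rm GL}^\epsilon$-class does not split in $L$ because $C_H(t)\supseteq T_1\times T_1$ and $\det(T_1)=F_q^\ast$ by surjectivity of the norm. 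Hence $\overline t$ is real in $S\subseteq G$, contradicting Lemma \ref{Real}.

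If $R_W\neq1$ (that is, $m\ge p$), I would use the Weyl-type group $\Sigma$. Choose a nontrivial $R_W$-orbit $\mathcal B$ of blocks, of size $p^a$ with $a\ge1$; the subgroup of $\Sigma$ permuting the blocks in $\mathcal B$ and fixing the rest is a copy of $S_{p^a}$, whose Sylow $p$-normaliser has even order and is not contained in the Sylow $p$-subgroup times its centraliser, so I can pick there a $2$-element $w$ acting nontrivially on the corresponding direct factor of $R_W$. Then $w$ normalises $R$ and commutes with $\sigma_0$ (as $\Sigma$ is centralised by $\sigma_0$). If $\det w=1$, this is the element sought. If $\det w=-1$ — possible only when $e$ is odd — I would replace $w$ by $w\delta$, where $\delta$ acts as $-1$ on $\bigoplus_{V_i\in\mathcal B}V_i$ and trivially elsewhere: $\delta$ centralises $R$, is fixed by $\sigma_0$, commutes with $w$, and $\det\delta=(-1)^{p^ae}=-1$, so $w\delta$ retains all the required properties and lies in ${\rm SL}_n^\epsilon(q)$. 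In either case the reduction of the first paragraph yields that $N_G(P)/PC_G(P)$ has even order.

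The step I expect to be most delicate is the reduction in the first paragraph: one must chain the two applications of Lemma \ref{Extension} correctly and check that the image of the $2$-element $w$ (or $w\delta$) remains nonidentity and of even order after factoring out the central $p$-subgroup $Z(L)\cap R$ and the $p$-subgroup furnished by Lemma \ref{Extension}. This works because $R_W$ meets $Z(L)$ trivially and a $2$-element cannot induce a nontrivial inner automorphism of a $p$-group, but it is precisely the point where the hypothesis "$p$ odd" is used decisively — mirroring the place where, in the defining-characteristic part of the proof, the exceptional family ${\rm PSL}_2(q)$ with $q\equiv3\ ({\rm mod}\ 4)$ emerges. The remaining ingredients (the equivalence "$Q$ cyclic $\iff m=1$" and the fact that the Sylow $p$-normaliser of $S_{p^a}$ strictly exceeds the Sylow subgroup times its centraliser and has even order) are elementary, and I would treat them briefly.
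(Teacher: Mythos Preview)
Your plan is sound and reaches the same conclusion, but the route differs from the paper's in two visible ways. First, the paper splits into case {\bf a} ($e=1$, so $m=n$) versus case {\bf b} ($e>1$), whereas you split along $R_W=1$ versus $R_W\neq1$; these are transversal case divisions. Second, and more substantively, in \emph{every} sub-case the paper produces an explicit involution $s\in N_{S_m}(R_W)$ inside the permutation group $\Sigma$ --- for example $s=(12)(34)$ when $4\le m<p$, $s=(12)$ when $m=3$, and elements borrowed from \cite[Lemma~3.3, Proposition~3.5]{GNT} otherwise --- corrects its determinant by a diagonal sign if needed, and then checks by a direct matrix computation that the image $\bar t$ lies in $N_S(P)$ but not in $C_S(P)Q$; Lemma~\ref{Real} is never invoked in this lemma, nor is Lemma~\ref{Extension}. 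Your $R_W\neq1$ branch is essentially the same idea (a $2$-element of $N_{S_{p^a}}(P_a)$, determinant-corrected by your $\delta$), only you route the descent through Lemma~\ref{Extension} rather than the paper's hands-on verification. Your $R_W=1$ branch, however, is a genuinely different device: instead of an involution in $\Sigma$ you build the central element $t=g_1g_2^{-1}$ of order~$p$, argue it is $\sigma_0$-fixed because a $p$-power automorphism of a cyclic $p$-group fixes the socle, and then contradict Lemma~\ref{Real} via reality of $t$ in $L$. This is cleaner than tracking the right permutation across the sub-ranges $m=2,3$, $4\le m<p$, $m=p,p+1$, $m\ge p+2$; the cost is two extra checks you should make explicit --- that $\bar t$ actually lands in $Z(P)$ (it does: $\bar t$ is a $p$-element of $S$ centralising the Sylow subgroup $Q$, hence lies in $Q$, and it is centralised by $P\le P^\ast M/M$), and that the ${\rm GL}^\epsilon$-class of $t$ does not split in $L$ (your determinant-of-centraliser argument via $T_1$ handles both signs of $\epsilon$).
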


\begin{proof} Since $G=SP$ is an almost simple group and $p>2$, by the structure of the automorphism group of $S$, we assume that $G$ is a subgroup of ${\rm PGL}_n^\epsilon(q)\rtimes \langle \sigma\rangle$. Set $M=LR\cap Z({\rm GL}_n^\epsilon(q))$. The inclusion $H^\ast\subset {\rm GL}_n^\epsilon(q)\rtimes \langle \sigma\rangle$ induces an injective group homomorphism $H^\ast/M\rightarrow {\rm PGL}_n^\epsilon(q)\rtimes \langle \sigma\rangle$, so that we may identify $H^\ast/M$ with a subgroup of ${\rm PGL}_n^\epsilon(q)\rtimes \langle \sigma\rangle$.
Clearly $S$ is contained in $H^\ast/M$.
Since $P^\ast$ is a Sylow $p$-subgroup of ${\rm GL}_n^\epsilon(q)\rtimes \langle \sigma\rangle$, by a suitable conjugation,
we may assume that $G$ and $P$ are contained in $H^\ast/M$ and $P^\ast M/M$ respectively.

{\bf  a.} Suppose $p\mid(q-\epsilon)$. Then $e=1$ and $m=n$. Let $V_i$ be the subspace generated by $e_i$. Without loss of generality, we identify $\Sigma$ with $S_n$ and suppose that $S_n$ acts on the set $\{e_1, \cdots, e_n\}$ by permuting subscripts.

Suppose that there is an involution $s\in N_{S_n}(R_W)$. Take $t$ to be $s$ if the determinant of $s$ is 1 and $s{\rm diag}(1, \cdots, 1, -1)$ if the determinant of $s$ is $-1$. Note that $t\in L$, that $t$ stabilizes $R_T$ and $R$, that $t$ and $\sigma_0$ commute, and that $t$ normalizes $Q^\ast$. For any $x\in R_1$, we have $txt^{-1}x^{-1}\in L\cap R_T\leq Q^\ast$.
Since $P^\ast=Q^\ast R_1 \langle\sigma_0 \rangle$, $t$ centralizes $P^\ast/Q^\ast$. Since the inclusion $P^\ast\subset P^\ast M$ induces a surjective homomorphism $P^\ast/Q^\ast\rightarrow (P^\ast M/M)/(Q^\ast M/M)$, the image $\bar t$ of $t$ in $S$ centralizes the quotient group $(P^\ast M/M)/(Q^\ast M/M)$. Since the inclusion $P\subset P^\ast M/M$ induces an injective group homomorphism $P/Q\rightarrow (P^\ast M/M)/(Q^\ast M/M)$, $\bar t$ centralizes $P/Q$. So we have $\bar tQ\in C_{N_S(Q)/Q}(P/Q)=N_S(P)/Q$.

Suppose $\bar t\in C_S(P)Q$. Then $\bar t\in C_S(P)\subset C_S(Q)$. Since $Q=Q^\ast M/M$, we have $[t, Q^\ast]\subset Z({\rm GL}^\epsilon(q))$. We write $t$ into a product $(i_1i_2)(j_1j_2)\cdots$ of transpositions with disjoint letters. Without loss of generality, we suppose $(i_1i_2)=(12)$. Take some $x={\rm diag}(a^{-1}, a, 1, \cdots, 1)\in Q^\ast$ for some $a\neq 1$.
Since $txt^{-1}x^{-1}=(12)x(12)^{-1}x^{-1}={\rm diag}(a^2, a^{-2}, 1, \cdots, 1)\in Z({\rm GL}^\epsilon(q))$, we have $a^2=1$. Since $p$ is odd, $a=1$. That causes a contradiction. So $\bar t$ is not in $C_S(P)Q$, the image of $\bar t$ in $N_S(P)/C_S(P)$ is an involution,  $N_S(P)/C_S(P)$ has even order and so does $N_G(P)/C_G(P)$.

We prove that such $s$ exists when $p\mid(q-\epsilon)$. By the proof in the last two paragraphs, $N_G(P)/C_G(P)$ has even order. That contradicts the assumption in Theorem \ref{Almost}.

Suppose $n\geq p+2$. By \cite[Lemma 3.3(i)]{GNT}, there is an involution $s\in N_{A_n}(R_W)$.
 Suppose $n=p ~ \mbox{\rm or}~ p+1$. We choose $R_W$ to be the subgroup generated by the cycle $(12\cdots p)$ and can take $s=(1, p)(2, p-1)\cdots (\frac{p-1}{2}, \frac{p+3}{2})$.
Suppose $4\leq n<p$. Then $R_W$ is trivial and we can take $s=(12)(34)$.
Suppose $n=3$. We can take $s=(12)$.

{\bf b.} Suppose $p\nmid(q-\epsilon)$. Then $R\lneqq L$ and $e>1$. When $m=1$, $R$ is cyclic and so is Sylow $p$-subgroup of $S$.
So we may assume that $m\geq 2$. We identify $\Sigma$ with $S_n$.

By the proof of \cite[Proposition 3.5]{GNT}, there is an involution $t\in N_{A_m}(R_W)$, except when $p>m=2, 3$ and when $m=p~\mbox{\rm or}~ p+1$. In this case, by a proof similar to the proof in the second and third paragraphs in the case {\bf a}, we prove that $N_G(P)/C_G(P)$ has even order.

When $p>m=2, 3$ or $m=p~\mbox{\rm or}~ p+1$, we take $t$ to be the element $t$ in the corresponding case in the proof of \cite[Proposition 3.5]{GNT}. Again, by a proof similar to the proof in the second and third paragraphs in the case {\bf a}, we prove that $N_G(P)/C_G(P)$ has even order.
\end{proof}

Let $V=F_q^{2n}$ be a quadratic space of type $\epsilon$, where $q$ is a power of some prime $r$. Set $H={\rm GO}(V)$ and $L=\Omega(V)$.
Choose an orthogonal basis $e_1,\cdots, e_n$ of $V$. For any $f\in H$, denote by $(x_{ij})_f$ the matrix under the basis $e_1,\cdots, e_n$. The map $H\rightarrow {\rm GO}_{2n}^\epsilon(q), f\mapsto (x_{ij})_f$ is a group isomorphism and it sends $L$ onto $\Omega_{2n}^\epsilon(q)$. Denote by $\sigma$ the field automorphism $H\rightarrow H, (x_{ij})\mapsto (x_{ij}^r)$. The isomorphism $\sigma$ induces field automorphisms on $L$ and ${\rm P\Omega}_{2n}^\epsilon(q)$, still denoted by $\sigma$. Let $\sigma_0$ be the $p$-part of $\sigma$.

Suppose that $p$ and $r$ are different. Define $\epsilon_p=+$ if $e={\rm ord}_p(q)$ is odd, and $\epsilon_p=-$ if $e$ is even. Set $d={\rm lcm}(2, e)$.
Let $R$ be a Sylow $p$-subgroup of $H$. By \cite[Chapter 3, \S10]{GL} and \cite[Chapter 4, \S4.10]{GLS}, we have $R=R_T\rtimes R_W$; there is an orthogonal indecomposable decomposition $$V=V_0\bot V_1\bot\cdots \bot V_m$$ of $V$ as ${F_q}R_T$-modules, such that $V_i\cong V_1$ for any $1\leq i\leq m$ is a quadratic space of dimension $d$ and type $\epsilon_p$ and either ${\rm dim} V_0<d$ or ${\rm dim} V_0=d$ and $V_0$ has type $-\epsilon_p$; the basis $e_1,\cdots, e_n$ of $V$ can be chosen, so that it is compatible with the decomposition and the isomorphisms $V_i\cong V_1$ for all $i$;
$R_T=R_1\times \cdots \times R_m$ with $R_i$ a cyclic subgroup of a cyclic maximal torus $T_i$ of ${\rm GO}(V_i)$, acting orthogonally indecomposably on $V_i$ and trivially on $V_j$ for all $j\neq i$, and $|T_i|=q^{d/2}-\epsilon_p$; there is a subgroup $\Sigma$ of $N_H(R_T)$, such that $\Sigma$ is centralized by $\sigma$ and isomorphic to $S_m$ and it acts on the sets $\{V_1,\cdots, V_m\}$, $\{T_1,\cdots, T_m\}$ and $\{R_1,\cdots, R_m\}$ by permuting subscripts; $R_W$ is a Sylow $p$-subgroup of $\Sigma$.

Clearly $\sigma_0$ stabilizes ${\rm GL}(V_1)$ and $R_W$. We choose $R_1$ to be a $\sigma_0$-stable $p$-Sylow subgroup of ${\rm GO}(V_1)$ and construct $R_i$ with the isomorphism $V_i\cong V_1$ for any $1\leq i\leq m$. Then $R_T$ is $\sigma_0$-stable and so is $R$.

\begin{lem}\label{D} Theorem \ref{Almost} holds when $S={\rm P\Omega}_{2n}^\epsilon(q)$ with $2\nmid n\geq 5$, $\epsilon=\pm$ and $p\nmid q$.
\end{lem}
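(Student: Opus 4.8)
\noindent{\bf Proof proposal.} The plan is to imitate the proof of Lemma~\ref{A}. First I would fix ambient groups. Since $G=SP$, the quotient $G/S$ is a $p$-group, hence embeds into a Sylow $p$-subgroup of ${\rm Out}(S)$; as $n\geq 5$ is odd there is no triality, and the diagonal and graph automorphisms of $S={\rm P\Omega}_{2n}^\epsilon(q)$ (with $q$ odd) are $2$-elements, so by the structure of ${\rm Aut}(S)$ we may assume $G\leq{\rm P\Omega}_{2n}^\epsilon(q)\rtimes\langle\sigma_0\rangle$. Because $|{\rm GO}_{2n}^\epsilon(q):\Omega_{2n}^\epsilon(q)|$ is a power of $2$, the chosen Sylow $p$-subgroup $R$ of $H={\rm GO}(V)$ lies in $L=\Omega(V)$, so $LR=L$. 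Put $H^\ast=L\rtimes\langle\sigma_0\rangle$, $P^\ast=R\rtimes\langle\sigma_0\rangle$, $Q^\ast=L\cap P^\ast=R$ and $M=L\cap Z({\rm GO}_{2n}^\epsilon(q))$. Then $P^\ast$ is a Sylow $p$-subgroup of $H^\ast$, the natural map identifies $H^\ast/M$ with $S\rtimes\langle\sigma_0\rangle\leq{\rm Aut}(S)$, which contains $G$, and after a suitable conjugation we may assume $G\subseteq H^\ast/M$ and $P\subseteq P^\ast M/M$.

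If $m=1$, then $R_W$ is trivial and $R=R_T=R_1$ lies in the cyclic torus $T_1$, so $R$ is cyclic and $S$ has cyclic Sylow $p$-subgroups, as desired. So assume $m\geq 2$. Since $d={\rm lcm}(2,e)$ with $e={\rm ord}_p(q)$, one has $p\mid q^{d/2}-\epsilon_p=|T_i|$, so every $R_i$ is nontrivial and $R_T=R_1\times\cdots\times R_m$ has $p$-rank at least $m\geq 2$; thus $S$ has noncyclic Sylow $p$-subgroups and it remains to contradict $N_G(P)/C_G(P)$ having odd order. As in Lemma~\ref{A}, it is enough to produce $t\in L$ of $2$-power order such that (i)~$t$ normalizes $R_T$ and $R_W$ and commutes with $\sigma_0$, and (ii)~the image $\bar t$ of $t$ in $S$ does not lie in $C_S(P)Q$. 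Indeed, (i)~implies that $t$ normalizes $R=R_T\rtimes R_W$, hence $P^\ast$ and $Q^\ast$, and that $[t,R_1]\subseteq R_T\subseteq Q^\ast$ and $[t,\sigma_0]=1$, so $t$ centralizes $P^\ast/Q^\ast$; passing to $H^\ast/M$ and then to $S$ exactly as in Lemma~\ref{A} gives $\bar tQ\in C_{N_S(Q)/Q}(P/Q)=N_S(P)/Q$, and then (ii)~forces $N_S(P)/C_S(P)$, hence $N_G(P)/C_G(P)$, to have even order, a contradiction.

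To construct $t$, the key point is that $d$ is even: through the fixed isometries $V_i\cong V_1$, each block transposition $(V_i\,V_j)$ has determinant $(-1)^d=1$, and since the $V_i$ are mutually isometric its spinor norm equals ${\rm disc}(Q|_{V_1})$; hence a product of two block transpositions has trivial spinor norm and the image of $A_m\leq\Sigma$ lies in $L=\Omega(V)$. So whenever $N_{A_m}(R_W)$ contains an involution, I would take $t$ to be such an involution: it satisfies~(i) because $\Sigma\leq N_H(R_T)$, $R_W$ is normalized by choice, and $\Sigma$ is centralized by $\sigma$. By the arguments of \cite[Lemma~3.3]{GNT} and \cite[Proposition~3.5]{GNT}, such an involution exists unless $p>m=2,3$ or $m\in\{p,p+1\}$; in these exceptional cases I would instead build $t$ from $-1$ on a sum of blocks $V_i$ fixed setwise by $R_W$ when such blocks exist, or from a block transposition twisted by some $-1_{V_i}$ to repair the spinor norm, or from an element inverting the tori $T_i$ compatibly, mirroring the explicit choices of \cite[Proposition~3.5]{GNT}; in each subcase one checks directly that $t$ lands in $\Omega(V)$, normalizes $R_T$ and $R_W$, and commutes with $\sigma_0$. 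Finally, (ii)~is verified as in case~{\bf a} of Lemma~\ref{A}: if $\bar t\in C_S(P)Q$ then (since $\bar t$ is a $p'$-element and $Q\trianglelefteq C_S(P)Q$) $\bar t\in C_S(P)\subseteq C_S(Q)$, so $[t,R]\subseteq M\subseteq Z({\rm GO}_{2n}^\epsilon(q))$; choosing $x$ a generator of a factor $R_i$ that $t$ moves, or in the exceptional cases an explicit $x\in R_T$ not centralized by $t$ modulo scalars, the commutator $txt^{-1}x^{-1}$ acts nontrivially on a proper nonzero subspace and so is not $\pm 1$, a contradiction.

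The main obstacle is the construction of $t$ for the exceptional values $m\in\{2,3\}$ and $m\in\{p,p+1\}$, where $N_{A_m}(R_W)$ has no involution: one must hand-craft a $2$-element inside the orthogonal blocks, and the delicate part is the spinor-norm bookkeeping ensuring $t\in\Omega(V)$ rather than merely ${\rm SO}(V)$, while preserving the normalization of $R$ and the commutation with $\sigma_0$; here the discriminants ${\rm disc}(Q|_{V_i})$, and hence $\epsilon$, $\epsilon_p$ and $q\bmod 4$, enter. A minor secondary point is pinning down which outer automorphisms occur in $G$, but this is harmless because both $p$ and $n$ are odd.
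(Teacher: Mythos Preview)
Your overall strategy---find a $2$-element $t\in L$ that normalises $R$, commutes with $\sigma_0$, and is not in $C_S(P)Q$---is the same as the paper's, and your verification of~(ii) via the commutator computation with a block element $x$ is essentially identical to what the paper does. The setup (passing to $S\rtimes\langle\sigma_0\rangle$ using that diagonal and graph outer automorphisms are $2$-elements for odd $n\geq 5$) is also the same.

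The principal difference is in the case analysis. You attempt to construct $t$ uniformly for all $m\geq 2$, and this forces you to confront the exceptional values $m\in\{2,3,p,p+1\}$ by hand, with the spinor-norm bookkeeping that you flag as the main obstacle. The paper bypasses most of this: it invokes Lemma~\ref{Real} together with the first two paragraphs of part~(b) of the proof of \cite[Proposition~3.6]{GNT} to show that whenever $V_0\neq 0$, or $p\nmid m$ with $m>1$, some nonidentity element of $Z(P)\cap Q$ is real in $S$, contradicting the odd-automizer hypothesis directly. This single stroke eliminates your exceptional cases $m=2$, $m=3$, and $m=p+1$ (all have $p\nmid m$), so no involution needs to be built there. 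The only surviving case with $m\geq 2$ is $V_0=0$ and $p\mid m$; there the paper appeals to parts~(c),~(d),~(e) of the proof of \cite[Proposition~3.6]{GNT} (not~3.5, which treats the linear and unitary groups) to obtain a $\sigma_0$-fixed involution $t\in L\setminus Z(L)$ with $(-1_V)^j t\in N_{S_m}(R_W)$ for some $j$, already inside $\Omega(V)$, and then finishes exactly as you do.

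So your proposal is not wrong, but it is working harder than necessary: the reality shortcut via Lemma~\ref{Real} is the missing idea that collapses the awkward small-$m$ cases, and the correct external input for the remaining case is \cite[Proposition~3.6]{GNT} rather than~3.5.
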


\begin{proof} Since $p$ is odd, we may identify $Q$ with $R$. Since $G=SP$ and $n\geq 5$, by the structure of the automorphism group of $S$, we assume that $G$ is a subgroup of $S\rtimes \langle \sigma_0\rangle$ and that $P$ is a subgroup of $P^\ast=R\rtimes \langle \sigma_0 \rangle$.

Suppose that $V_0\neq 0$, or $p\nmid m$ and $m>1$. By the first and second paragraph of the part (b) of the proof of \cite[Proposition 3.6]{GNT}, some nonidentity element of $Z(P)$ is real in $G$. But by Lemma \ref{Real} and the assumption in Theorem \ref{Almost}, all nonidentity element of $Z(P)$ are not real in $G$. So a contradiction arises. Suppose that $m=1$. Then $Q$ is cyclic.

Suppose that $V_0=0$, $m\geq 2$ and $p|m$. By the parts (c), (d) and (e) of the proof of \cite[Proposition 3.6]{GNT}, there is a $\sigma_0$-fixed involution $t\in L-Z(L)$ such that $(-1_V)^jt$ for some suitable $j\in \{0, 1\}$ belongs to $N_{S_m}(R_W)$, where $1_V$ is the identity map on $V$.

Denote by $\bar t$ the image of $t$ in $S$. Clearly $t$ normalizes $R_T$ and $R$. Since $P=R(P\cap \langle \sigma_0 \rangle)$,
$\bar t\in N_S(P)$. Suppose that $\bar t\in C_S(P)$. Then $\bar t\in C_S(P)\subset C_S(Q)$ and $[t, R]\subset \{\pm 1_V\}$. Next we use
the inclusion $[t, R]\subset \{\pm 1_V\}$ to produce a contradiction. We may assume that $t$ is an involution in $S_m$. We write $t$ into a product $(i_1i_2)(j_1j_2)\cdots$ of transpositions with disjoint letters. Without loss of generality, we suppose $(i_1i_2)=(12)$. Take a suitable $A\in {\rm GO}_d^{\epsilon_p}(q)$ of order $p$, so that
$x={\rm diag}(A^{-1}, A, E_d, \cdots, E_d)\in R$, where $E_d$ is the identity $(d\times d)$-matrix. Since $txt^{-1}x^{-1}=(12)x(12)^{-1}x^{-1}={\rm diag}(A^2, A^{-2}, E_d, \cdots, E_d)\in \{\pm E_{2n}\}$, we have $A^4=E_d$. Thus $A=E_d$ since $p$ is odd. That causes a contradiction.
Therefore $\bar t$ is outside $C_S(P)$, $N_S(P)/C_S(P)$ has even order and so does $N_G(P)/C_G(P)$. That contradicts the assumption in Theorem \ref{Almost}.
\end{proof}

\begin{lem}\label{D} Theorem \ref{Almost} holds when $S={\rm E}_6^\epsilon(q)$ with $\epsilon=\pm$ and $p\nmid q$.
\end{lem}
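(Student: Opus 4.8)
The plan is to imitate the proofs of Lemma~\ref{A} and of the preceding lemma on ${\rm P\Omega}_{2n}^\epsilon(q)$. Let $\cal G$ be the adjoint simple algebraic group of type ${\rm E}_6$ over $\bar F_q$, equipped with a Frobenius endomorphism $F$ that is untwisted when $\epsilon=+$ and twisted by the graph automorphism when $\epsilon=-$, so that ${\cal G}^F$ is the adjoint version of ${\rm E}_6^\epsilon(q)$, has trivial center, and satisfies $S=[{\cal G}^F,{\cal G}^F]$ with ${\cal G}^F/S$ cyclic of order $\gcd(3,q-\epsilon)$. Since $p$ is odd, the $p$-part of ${\rm Out}(S)$ is generated by a field automorphism $\sigma_0$ together with, when $3\mid q-\epsilon$, a diagonal automorphism of order $3$, the graph automorphism and the extra field-twist for $\epsilon=-$ contributing only a factor of order $2$. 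Hence, exactly as in the first paragraph of the proof of Lemma~\ref{A} with ${\cal G}^F$ in the role of ${\rm GL}_n^\epsilon(q)$, I may assume that $G\subseteq{\cal G}^F\rtimes\langle\sigma_0\rangle$ and $P\subseteq P^\ast:=R\rtimes\langle\sigma_0\rangle$, where $R$ is a $\sigma_0$-stable Sylow $p$-subgroup of ${\cal G}^F$; put $Q=P\cap S$.

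By \cite[Chapter~3, \S10]{GL} and \cite[Chapter~4, \S4.10]{GLS}, $R=R_T\rtimes R_W$, where $R_T$ is a Sylow $p$-subgroup of $(T_0)^F$ for an $F$-stable maximal torus $T_0$ of $\cal G$ with $|(T_0)^F|_p$ maximal, and $R_W$ is a Sylow $p$-subgroup of the relative Weyl group $W_e$ of a Sylow $\Phi_e$-torus, with $e={\rm ord}_p(\epsilon q)$; moreover $T_0$ and $R_W$ may be taken $\sigma_0$-stable, and $\sigma_0$ acts trivially on the Weyl group. Reading off the order polynomial of ${\rm E}_6^\epsilon(q)$ from the degrees $2,5,6,8,9,12$, the multiplicity of $\Phi_e$ equals $1$ precisely when $e\in\{5,8,9,12\}$, and then $R_T$ is cyclic and $W_e$ is cyclic of order $e$; since $e\mid p-1$ forces $p>e$, we get $R_W=1$ and $R$ cyclic, which is the conclusion of the lemma. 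So assume that $p$ divides $|S|$ and $e\notin\{5,8,9,12\}$; then $e\in\{1,2,3,4,6\}$, the multiplicity of $\Phi_e$ is at least $2$, $R$ is noncyclic, and $W_e$ is one of the complex reflection groups ${\rm W}({\rm E}_6)$, ${\rm W}({\rm F}_4)$, $G_{25}$, $G_8$, $G_5$ (of ranks $6,4,3,2,2$); moreover $R_W\ne1$ only when $(p,e)\in\{(3,1),(3,2),(5,1)\}$. It suffices to show that $N_G(P)/C_G(P)$ has even order.

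For this I produce, as in the proofs of Lemma~\ref{A} and of the preceding lemma and in \cite[\S3]{GNT}, a $\sigma_0$-fixed element $t$ of $S$ which normalizes $R$, whose image in $W_e$ is a nontrivial involution normalizing $R_W$, and which squares into $(T_0)^F$. Each $W_e$ above has even order, so $N_{W_e}(R_W)$ contains an involution $w$; since $w\ne1$ has order prime to $p$ and $W_e$ acts faithfully on the Sylow $\Phi_e$-torus, $w$ acts nontrivially on $R_T$. When $R_W=1$ one takes $t$ to be a lift of $w$ through Chevalley generators, which is automatically $\sigma_0$-fixed (a field automorphism fixes $n_\alpha$), lies in $S$ (being a product of $n_\alpha$'s, hence in the image of ${\cal G}_{\rm sc}^F$), and normalizes $R=R_T$; when $R_W\ne1$, i.e.\ in the three cases above, one corrects this lift inside $N_{\cal G}(T_0)^F$ as in the proof of \cite[Proposition~3.6]{GNT} so that it normalizes $R=R_T\rtimes R_W$, and for $\epsilon=-$ one works with the relative Weyl group of the twisted group. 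Now write $\bar t$ for the image of $t$ in $S$. Since $P=R(P\cap\langle\sigma_0\rangle)$ and $t$ commutes with $\sigma_0$ and normalizes $R$, we get $\bar t\in N_S(P)$ and $\bar t^{\,2}\in C_S(P)$ (as $t^2\in(T_0)^F$ centralizes $R_T$, and $R_W$ when present, and $\sigma_0$). If $\bar t\in C_S(P)$, then $\bar t\in C_S(Q)$, so $[t,R]$ lies in the center of ${\cal G}^F$; since $\cal G$ is adjoint this center is trivial, so $[t,R]=1$, contradicting the fact that $t$ acts nontrivially on $R_T$ (for $p=3$, where $Q$ may have index $3$ in $R$, one argues modulo the image of $Z({\cal G}_{\rm sc}^F)$ and derives $x^{4}=1$ for a suitable $x$ of order $p$ in $R_T$, again a contradiction since $p$ is odd, exactly as the center of ${\rm GL}_n^\epsilon(q)$ is handled via $M$ in the proof of Lemma~\ref{A}). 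Hence $\bar tC_S(P)$ is a nontrivial involution of $N_S(P)/C_S(P)$, so $N_G(P)/C_G(P)$, and therefore $N_G(P)/PC_G(P)$, has even order, contradicting the hypothesis of Theorem~\ref{Almost}. Thus $R$ is cyclic, which proves the lemma.

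The step I expect to be the main obstacle is the construction of the element $t$ in the third paragraph. Unlike the classical groups treated in Lemma~\ref{A} and in the preceding lemma, ${\rm E}_6^\epsilon(q)$ carries no convenient matrix realization, so the whole analysis must be carried out inside the root datum of ${\rm E}_6$: for each $e\in\{1,2,3,4,6\}$ one must identify $W_e$ together with a Sylow $p$-subgroup $R_W$ of it, produce an involution in $N_{W_e}(R_W)$ acting nontrivially on $R_T$, and control its lift to ${\cal G}^F$ so that the lift genuinely normalizes $R=R_T\rtimes R_W$ and has square in $(T_0)^F$. The case $p=3$ is the most delicate, since then $R_W\ne1$, the quotient ${\cal G}^F/S$ may have order $3$, and $R$ also absorbs the $\Phi_3$- and $\Phi_9$-parts; there one must work modulo the image of $Z({\cal G}_{\rm sc}^F)$ throughout, just as the center of ${\rm GL}_n^\epsilon(q)$ is handled via the subgroup $M$ in the proof of Lemma~\ref{A}.
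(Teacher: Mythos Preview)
Your plan is coherent and your $\Phi_e$-bookkeeping is correct (in particular the identification of the cyclic cases with $e\in\{5,8,9,12\}$ and of the cases $R_W\ne1$ with $(p,e)\in\{(3,1),(3,2),(5,1)\}$), but the route differs substantially from the paper's, and the obstacle you flag at the end is a genuine gap. The paper does \emph{not} attempt a uniform construction of an involution lifted from $N_{W_e}(R_W)$. Instead it splits into cases by divisibility and in most of them avoids working inside ${\rm E}_6$ at all, invoking Lemma~\ref{Real} rather than building $t$: whenever $p\nmid(q^5-\epsilon)(q^9-\epsilon)$, or $3\ne p\mid(q-\epsilon)$, the proof of \cite[Proposition~3.8]{GNT} already exhibits a nonidentity real element in $Z(P)\cap Q$, and Lemma~\ref{Real} gives the contradiction directly. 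The cyclic case is $p\mid(q^5-\epsilon)(q^9-\epsilon)$, $p\nmid(q^3-\epsilon)$. For the residual case $p\mid(q^3-\epsilon)$, $p\nmid(q-\epsilon)$ (forcing $p\ne3$, and after a short check $p\ne5$), the paper quotes \cite[Proposition~7]{S} to obtain a $\hat\varphi$-fixed involution in $N_{\hat S}(\hat Q)/C_{\hat S}(\hat Q)$ and pushes it down via Lemma~\ref{Extension}. Finally, for $p=3\mid(q-\epsilon)$ it quotes the last paragraph of the proof of \cite[Proposition~3.8]{GNT} to get a $P$-stable subgroup $B$ with $N_B(\hat Q)=\hat Q\rtimes C_2$, and again finishes with Lemma~\ref{Extension}.

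So the paper's strategy is: real elements plus Lemma~\ref{Real} handle almost everything, and two external citations (\cite{S} and \cite{GNT}) supply the needed involution in the two remaining cases. Your uniform approach is more conceptual, but the missing step --- producing, for each $e\in\{1,2,3,4,6\}$ and for both signs of $\epsilon$, a $\sigma_0$-fixed lift in $S$ of an involution of $N_{W_e}(R_W)$ that genuinely normalizes $R=R_T\rtimes R_W$ --- is precisely the computation the paper outsources. Your appeal to ``as in the proof of \cite[Proposition~3.6]{GNT}'' does not transfer: that argument relies essentially on the explicit orthogonal decomposition $V=V_0\perp V_1\perp\cdots\perp V_m$ and on permutation matrices, which have no analogue for ${\rm E}_6$; and for $e>1$ the relative Weyl group $W_e$ sits inside $N_{\cal G}(T_0)^F/(T_0)^F$ in a way that makes ``lift $w$ through Chevalley generators $n_\alpha$'' imprecise without further work. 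Unless you either carry out those lifts case by case in the ${\rm E}_6$ root datum (including the twisted form and the $p=3$ diagonal-automorphism subtlety) or supply citations equivalent to \cite{S} and \cite{GNT}, the argument is incomplete.
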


\begin{proof} Suppose $p\nmid (q^5-\epsilon)(q^9-\epsilon)$; by the first paragraph of the proof of \cite[Proposition 3.8]{GNT}, $Z(P)\cap Q$ contains a nonidentity real element in $S$. But, since $N_G(P)/PC_G(P)$ has odd order, by Lemma \ref{Real}, all nonidentity element in $Z(P)$ are not real. That causes a contradiction.

Suppose $p\mid (q^5-\epsilon)(q^9-\epsilon)$ but $p\nmid (q^3-\epsilon)$. Then $Q$ is cyclic.

Suppose $p\mid (q^3-\epsilon)$ but $p\nmid (q-\epsilon)$. Then $p\neq 3$; otherwise, $q^2+\epsilon q+1=(q-\epsilon)^2+ 3\epsilon q$,  $p\nmid (q^2+\epsilon q+1)$ and $p\nmid (q^3-\epsilon)$; that contradicts the assumption. Since $G=SP$ is an almost simple group, by the structure of the group ${\rm Aut}(S)$, we may assume that $G=S\rtimes \langle \varphi\rangle$ and $P=Q\rtimes \langle \varphi\rangle$ for a suitable field automorphism $\varphi$ of $S$.

Set $\hat S ={\rm E}_6^\epsilon(q)_{\rm sc}$. Then $\hat S$ is a universal cover of $S$. By \cite[Remark 2.4.11 and Theorem 2.5.14 (d)]{GLS}, the field automorphism $\varphi$ can be uniquely lifted to a field automorphism $\hat\varphi$ of $\hat S$. Let $\hat Q$ be the Sylow $p$-subgroup of the converse image of $Q$ in $\hat S$. Clearly $\varphi$ normalizes $Q$, $\hat\varphi$ normalizes $\hat Q$, the action of $\varphi$ on $S$ induces an action of $\varphi$ on $N_S(Q)/QC_S(Q)$, and the action of $\hat\varphi$ on $\hat S$ induces an action of $\hat\varphi$ on $N_{\hat S}(\hat Q)/\hat QC_{\hat S}(\hat Q)$.
The natural surjective homomorphism $\hat S\rightarrow S$ induces a group isomorphism $$N_{\hat S}(\hat Q)/\hat QC_{\hat S}(\hat Q)\cong N_S(Q)/QC_S(Q)$$ compatible with field automorphisms $\varphi$ and $\hat\varphi$. In particular, an element in $N_{\hat S}(\hat Q)/\hat QC_{\hat S}(\hat Q)$ is fixed by $\hat\varphi$ if and only if the corresponding element in $N_S(Q)/QC_S(Q)$ is fixed by $\varphi$.

Suppose that $\epsilon=+$. We claim that $p\neq 5$. Otherwise, since $p\nmid (q-1)$, we have $q=5k+t$, where $t=\mbox{\rm 0, or 2, or 3, or 4}$; since $q^3-1\equiv t^3-1\, ({\rm mod} \, 5)$, $p\nmid (q^3-1)$; that contradicts the assumption $p\mid (q^3-\epsilon)$. Since $p$ does not divide the order of the Weyl group of $\hat S$, by the proof of \cite[Proposition 7]{S}, $\hat\varphi$ fixes an involution in $N_{\hat S}(\hat Q)/C_{\hat S}(\hat Q)$. So $\hat\varphi$ fixes an involution in $N_{\hat S}(\hat Q)/\hat QC_{\hat S}(\hat Q)$ and $\varphi$ fixes an involution in $N_S(Q)/QC_S(Q)$. In particular, $P$ fixes an involution in $N_S(Q)/QC_S(Q)$. By Lemma \ref{Extension},
we have a surjective homomorphism $C_{N_S(Q)/QC_S(Q)}(P)\rightarrow N_G(P)/PC_G(P)$ with kernel being a $p$-subgroup.
The image in $N_G(P)/PC_G(P)$ of the involution fixed by $P$ is an involution. That contradicts the assumption in Theorem \ref{Almost}.

Suppose that $\epsilon=-$. Similarly we use the proof of \cite[Proposition 7]{S} to prove that $N_G(P)/PC_G(P)$ contains an involution. That contradicts the assumption in Theorem \ref{Almost} again.

Suppose that $3\neq p|(q-\epsilon)$. It is proved in the proof of \cite[Proposition 3.8]{GNT} that $Z(P)\cap Q$ contains a nonidentity real element $z$ in $S$. By Lemma \ref{Real} and Theorem \ref{Almost}, that causes a contradiction.

Suppose that $3=p|(q-\epsilon)$. Let $\cal G$ be a simply connected simple algebraic group of type ${\rm E}_6$ over $\bar F_q$ with a Frobenius map $F: \cal G\rightarrow \cal G$, such that $\hat S={\cal G}^F$. By \cite[Theorem 25.11]{MT}, there is a unique $\hat S$-conjugacy class of maximal tori $\cal T$ in $\cal G$ such that $T={\cal T}^F$ has order $(q-\epsilon)^6$ and that $N_{
\hat S}({\cal T})/T$ is the Weyl group of $\rm E_6$. By the Frattini argument, we may assume that $P$ normalizes $A=N_{\hat S}(\cal T)$ and $T$. By the last paragraph of the proof of \cite[Proposition 3.8]{GNT}, $A$ has a subgroup $B$ containing $T$ and fixed by $P$, such that $N_B(\hat Q)=\hat Q\rtimes C_2=\hat Q\rtimes \langle t\rangle$. The image of $t$ in $N_{\hat S}(\hat Q)/\hat QC_{\hat S}(\hat Q)$ is an involution fixed by $P$ and the corresponding element in $N_S(Q)/QC_S(Q)$ is an involution fixed by $P$. As above, we use Lemma \ref{Extension} to prove that $N_G(P)/PC_G(P)$ contains an involution. That causes a contradiction.
\end{proof}

\begin{lem}\label{Defining} Theorem \ref{Almost} holds when $S$ is a simple group of Lie type in characteristic $p$.
\end{lem}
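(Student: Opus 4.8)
The plan is to write $S={\cal G}^F/Z({\cal G}^F)$ for a simply connected simple algebraic group ${\cal G}$ over $\bar F_q$, $q=p^f$, with a Steinberg endomorphism $F$, to fix an $F$-stable Borel subgroup $B=UT$ of ${\cal G}$ with unipotent radical $U$ and maximally split maximal torus $T$, and to use that the image $\bar U$ of $U^F$ in $S$ is a Sylow $p$-subgroup of $S$. First I would set aside the finitely many ``nongeneric'' small groups (for instance $^2{\rm G}_2(3)'\cong{\rm PSL}_2(8)$), which all have cyclic Sylow $p$-subgroups and can be handled directly. Since $G=SP$ with $G/S$ a $p$-group while the diagonal automorphisms of $S$ have order prime to $p$, the group $P/(P\cap S)$ is a cyclic $p$-group of outer automorphisms generated by a field automorphism $\varphi$ of $S$ stabilizing $B$, $T$ and $U$ --- except possibly when $p=3$ and ${\cal G}$ has type ${\rm D}_4$, where $\varphi$ may also involve triality. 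I take $Q=P\cap S=\bar U$.

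The heart of the matter is to compute the Sylow automizer explicitly. Since the normalizer of a maximal unipotent subgroup is its Borel and $U^F$ is Zariski dense in $U$, one has $N_{{\cal G}^F}(U^F)=B^F=U^FT^F$; and a standard computation (using that $T$ acts on $U/[U,U]$ through the simple roots) gives $C_{{\cal G}^F}(U^F)=Z(U^F)\times Z({\cal G})^F$. Passing to $S$ --- where, for $g\in{\cal G}^F$, the map $u\mapsto[g,u]$ is a homomorphism from the $p$-group $U^F$ to the $p'$-group $Z({\cal G})^F$, hence trivial --- one obtains $$D\;:=\;N_S(\bar U)/\bar U\,C_S(\bar U)\;\cong\; T^F/Z({\cal G})^F,$$ an abelian group of order prime to $p$. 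Applying Lemma \ref{Extension} with $H=S$ and $Q=\bar U$, and noting that the kernel there is a $p$-group while $D$ is a $p'$-group, I get $N_G(P)/PC_G(P)\cong C_D(\varphi)$, with $\varphi$ acting on $D$ through $P/\bar U$. Thus the hypothesis of Theorem \ref{Almost} is equivalent to $C_D(\varphi)$ having odd order.

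The next step is to prove that $C_D(\varphi)$ has even order unless ${\cal G}$ has type ${\rm A}_1$. For this it suffices to find an involution of $T^F$ --- of $C_{T^F}(\varphi)=T^{F_0}$ when $\varphi$ is a nontrivial field automorphism with fixed subfield $F_{q_0}$ --- lying outside $Z({\cal G})^F$. Writing $T^F$ as a direct product of $k\ge 1$ cyclic groups of even order, one has $|T^F[2]|=2^k$, while $|Z({\cal G})^F[2]|\le|Z({\cal G})[2]|\le 4$. A type-by-type inspection of the maximally split torus and of $Z({\cal G})^F$ --- for the twisted groups using the known structure of $T^F$, and for $^3{\rm D}_4$ the fact that triality has no nonzero fixed point on $Z({\rm Spin}_8)$ --- shows $2^k>|Z({\cal G})^F[2]|$ in every case except ${\cal G}={\rm SL}_2$; replacing $q$ by $q_0$ covers the field-automorphism case, and the remaining triality situation ($p=3$, type ${\rm D}_4$) is settled by the direct computation $C_D(\varphi)\cong(F_{q_0}^\times)^2$. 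In each of these cases $C_D(\varphi)$ contains an involution, contradicting the hypothesis.

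Finally I would treat ${\cal G}={\rm SL}_2$, i.e.\ $S={\rm PSL}_2(q)$: here $T^F\cong F_q^\times$ acts on $\bar U\cong(F_q,+)$ by multiplication by squares, so $D\cong F_q^\times/\{\pm1\}$ has order $(q-1)/2$, and for a field automorphism $\varphi$ with fixed subfield $F_{q_0}$ one computes $C_D(\varphi)\cong F_{q_0}^\times/\{\pm1\}$ of order $(q_0-1)/2$ (using that $q=q_0^{p^a}$ with $p^a$ odd makes $(q-1)/(q_0-1)$ odd). Since $p$ is odd, $q\equiv q_0\,({\rm mod}\,4)$, so $N_G(P)/PC_G(P)$ has odd order exactly when $q\equiv3\,({\rm mod}\,4)$. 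Hence the hypothesis forces $S={\rm PSL}_2(q)$ with $q=p^f\equiv3\,({\rm mod}\,4)$, and when $f=1$ the Sylow $p$-subgroup is in addition cyclic; either way the conclusion of Theorem \ref{Almost} holds. The main obstacle throughout is the defining-characteristic situation itself: assembling in a uniform fashion the structural facts about the maximally split torus $T^F$ and the centre $Z({\cal G})^F$ across all twisted types, and treating triality separately when $p=3$.
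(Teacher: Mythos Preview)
Your sketch is correct and reaches the same conclusion as the paper, but by a genuinely different route. The paper never identifies the whole automizer: instead it works type by type through the (twisted) Dynkin diagrams and for each one writes down an explicit involution $h\in T^F$ --- always a product of elements $h_\alpha(-1)$ for suitable fundamental roots $\alpha$ --- checks by hand that $h$ fails to centralize a specific element of $U^F$, and observes that $h$ is fixed by all field (and, in the ${\rm D}_4$ case, graph) automorphisms because $(-1)^r=-1$; in the rank-one case it simply quotes \cite[Theorem~A]{GNT} for the congruence on $q$. Your approach is more structural: you first prove the clean isomorphism $N_S(\bar U)/\bar U\,C_S(\bar U)\cong T^F/Z({\cal G})^F$ and then, via Lemma~\ref{Extension}, reduce to a $2$-torsion count $|T^F[2]|>|Z({\cal G})^F[2]|$. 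This is conceptually neater and, in the untwisted case, dispatches everything at once (rank $\ell\ge 2$ gives $2^\ell>4\ge|Z[2]|$). What it does \emph{not} buy is the elimination of the case analysis for the twisted types, since determining the invariant factors of the maximally split torus (and hence $|T^F[2]|$) for $^2{\rm A}_\ell$, $^2{\rm D}_\ell$, $^3{\rm D}_4$, $^2{\rm E}_6$, $^2{\rm G}_2$ is essentially the same bookkeeping the paper does when it picks its explicit $h$'s. Two small remarks: (i) your statement $C_{T^F}(\varphi)=T^{F_0}$ is correct but stronger than needed --- since every involution of $T^F$ is a product of $h_\alpha(-1)$'s and hence already lies in the prime-field points, the $2$-torsion of $T^F$ is automatically $\varphi$-fixed, which is really all your counting uses; (ii) for ${\rm PSL}_2(q)$ you compute the automizer directly, whereas the paper simply cites \cite{GNT}; your argument here is self-contained and arguably preferable.
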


\begin{proof} Suppose that $S$ is $^2 G_2(3)'$. In this case, $p=3$ and $S$ is isomorphic to ${\rm PSL}_2(8)$. Since Sylow $3$-subgroups of $S$ are cyclic, Theorem \ref{Almost} holds. So in the rest of the proof, we may suppose that $S$ is not $^2 G_2(3)'$.

Suppose that $S\neq {\rm D}_4(q)$ or $p\neq 3$, where $q$ is a power of $p$. By the structure of automorphism groups of finite simple groups, we may suppose that $G=S\rtimes \langle \delta\rangle$ and $P=Q\rtimes \langle \delta\rangle$ for a suitable field automorphism $\delta$ of order a power of $p$.
Since $p$ is odd, it is known that there is a simply connected simple algebraic group $\cal G$ over $\bar F_q$ with a Frobenius map $F: \cal G\rightarrow \cal G$ such that $S={\cal G}^F/Z({\cal G}^F)$.
By \cite[Remark 2.4.11 and Theorem 2.5.14 (d)]{GLS}, the field automorphism $\delta$ can be uniquely lifted to a field automorphism $\sigma$ of ${\cal G}^F$. Denote by $R$ the Sylow $p$-subgroup of the converse image of $Q$ in ${\cal G}^F$.
Since $\delta$ stabilizes $Q$, $\sigma$ stabilizes $R$. The natural surjective homomorphism ${\cal G}^F\rightarrow S$ induces a group isomorphism $$N_{{\cal G}^F}(R)/RC_{{\cal G}^F}(R)\cong N_S(Q)/QC_S(Q). $$

An element in $N_{{\cal G}^F}(R)/RC_{{\cal G}^F}(R)$ is fixed by $\sigma$ if and only if its image in $N_S(Q)/QC_S(Q)$ is fixed by $\delta$. In particular, an involution $d$ in $N_{{\cal G}^F}(R)/RC_{{\cal G}^F}(R)$ is fixed by $\sigma$ if and only if its image $\bar d$ in $N_S(Q)/QC_S(Q)$ is fixed by $\delta$ if and only if $\bar d\in C_{N_S(Q)/QC_S(Q)}(P)$. By Lemma \ref{Extension},
we have a surjective homomorphism $C_{N_S(Q)/QC_S(Q)}(P)\rightarrow N_G(P)/PC_G(P)$ with kernel being a $p$-subgroup.
The image in $N_G(P)/PC_G(P)$ of $\bar d$ is an involution. So in order to prove the lemma, it suffices to show that there is a $\sigma$-fixed involution in $N_{{\cal G}^F}(R)/RC_{{\cal G}^F}(R)$.

Let $T$ be a $F$-stable maximal torus of $\cal G$ and $\Phi$ the root system of $\cal G$ with respect to $T$ (a subset of the character group of $T$). Denote by ${\rm G_a}$ the additive group. For each $\alpha$, there is a morphism $x_\alpha: {\rm G_a}\rightarrow \cal G$ of algebraic groups, which induces an isomorphism onto $x_\alpha({\rm G_a})$ such that
$tx_\alpha(c)t^{-1}=x_\alpha(\alpha(t)c)$ for any $t\in T$ and any $c\in \bar F_q$. The image of $x_\alpha$ is called the root subgroup of $\cal G$ with respect to $T$ associated with $\alpha$, denoted by $U_\alpha$. For any $\alpha\in \Phi$ and any nonzero $t\in \bar F_q$,
set \begin{center} $n_\alpha(t)=x_\alpha(t)x_{-\alpha}(-t^{-1})x_\alpha(t)$ and $h_\alpha(t)=n_\alpha(t)n_\alpha(1)^{-1}$.  \end{center}
Then $T=\langle h_\alpha(t)|t\in \bar F_q-\{0\},~ \alpha\in \Phi \rangle$. Let $\Delta$ be a fundamental system of $\Phi$ and $\Phi^+$ the corresponding positive subsystem of $\Phi$. Set $U=\langle U_\alpha|\alpha\in  \Phi^+\rangle$ and $B=TU$. Then $B$ is a $F$-stable Borel subgroup of $\cal G$ containing $T$ with the unipotent radical $U$. It is known that $U^F$ is a Sylow $p$-subgroup of ${\cal G}^F$. Since $R$ is also a Sylow $p$-subgroup of ${\cal G}^F$, we may assume that $R=U^F$.

Suppose that the permutation on the coxeter graph associated with $\Phi$ induced by $F$ is trivial. Further assume that the rank of $\Phi$ is bigger than 1. Then we can choose two fundamental roots $r\neq s\in \Phi$ such that $h_r(-1)x_s(u)h_r(-1)^{-1}=x_s(-u)$ for any $u\in F_q$. So $h_r(-1)$ lies in $N_{{\cal G}^F}(R)$ but outside $C_{{\cal G}^F}(R)$. In particular, the image of $h_r(-1)$ in $N_{{\cal G}^F}(R)/RC_{{\cal G}^F}(R)$ is an involution. Since $\sigma(h_r(-1))=h_r(-1)$, the involution is fixed by $\sigma$ and so the group $N_G(P)/PC_G(P)$ has even order. This contradicts the assumption in Theorem \ref{Almost}. So the rank of $\Phi$ has to be 1 and then $S$ is isomorphic to ${\rm PSL}_2(q)$. In this case, by \cite[Theorem A]{GNT}, $q=p^f \equiv 3\, ({\rm mod}\, 4)$.

Suppose that the permutation $\rho$ on the coxeter graph associated with $\Phi$ induced by $F$ is nontrivial. In this case, since $p$ is odd, $\Phi$ is type of $\rm A$, $\rm D$, $\rm E$ or ${\rm G}_2$.

Suppose that $\Phi$ is type of $\rm A_\ell$. Further suppose that $\ell\geq 4$. We choose roots $r$ and $s$ in the coxeter graph associated with $\Phi$ as the following
$$\begin{tikzpicture}[scale=1]
        \pgfmathsetmacro{\h}{1.5}
        \coordinate (a) at (5*\h,0);
        \draw[fill=black](0,0)circle[radius=.05] node[above]{$r$}--++(\h,0)circle[radius=.05]node[above]{$s$}-- ++(1.2*\h,0)node[right]{$\ldots$};
        \draw[fill=black](a)circle[radius=.05]node[above]{$\rho(r)$} --++(-\h,0)circle[radius=.05]node[above]{$\rho(s)$} ;
        \draw(a)--++(-2.4*\h,0);
    \end{tikzpicture}
$$
Set $h=h_s(-1)h_{\rho(s)}(-1)$ and $g=x_r(u)x_{\rho(r)}(u^q)$ for some $u\in F_{q^2}-\{0\}$. Clearly $h$ is a $\sigma$-fixed involution in $T^F$, $g$ is an element of $R$ and $g^h=x_r(-u)x_{\rho(r)}(-u^q)\neq g$. So the image of $h$ in $N_{{\cal G}^F}(R)/RC_{{\cal G}^F}(R)$ is a $\sigma$-fixed involution and $N_G(P)/PC_G(P)$ has even order.

Suppose that $\ell=3$ and that $\{r, s, \rho(r)\}$ is a fundamental system of $\Phi$. Set $h=h_s(-1)$ and $g=x_r(u)x_{\rho(r)}(u^q)$ for some $u\in F_{q^2}-\{0\}$. Similarly, the image of $h$ in $N_{{\cal G}^F}(R)/RC_{{\cal G}^F}(R)$ is a $\sigma$-fixed involution and $N_G(P)/PC_G(P)$ has even order.

Suppose that $\ell=2$ and that $\{r, s\}$ is a fundamental system of $\Phi$. Any element $g$ of $R$ can be uniquely written as $x_r(t_1)x_s(t_1^q)x_{r+s}(t_2)$ for some $t_1, t_2\in F_{q^2}$ such that $t_2+t_2^q=\pm t_1^{q+1}$; moreover, the order of $R$ is $q^3$.
Set $h=h_r(-1)h_s(-1)$. Suppose that $h\in C_{{\cal G}^F}(R)$. Then we have $$g^h=x_r(-t_1)x_s(-t_1^q)x_{r+s}(t_2)=x_r(t_1)x_s(t_1^q)x_{r+s}(t_2)=g. $$ The equalities force $t_1=0$. But $t_1$ may take nonzero element in $F_{q^2}$. That causes a contradiction.

Suppose that $\Phi$ is type of $\rm E_6$. Then, as in the case ${\rm A}_\ell$ for $\ell\geq 4$, we prove that $N_G(P)/PC_G(P)$ has even order. That causes a contradiction.

Suppose that $\Phi$ is type of $\rm D_\ell$ for $\ell \geq 4$. Suppose that $\ell\geq 5$ or that $S=~^2D_4(q)$. As in the case ${\rm A}_\ell$ for $\ell= 3$, we prove that $N_G(P)/PC_G(P)$ has even order. Suppose that $S=~^3D_4(q)$. We take the coxeter graph associated with $\Phi$ as the following
$$\begin{tikzpicture}[scale=1]
        \pgfmathsetmacro{\h}{1.5}
        \coordinate (a) at (5*\h,0);
        \draw[fill=black](a)circle[radius=.05]node[above]{$r_3$} --++(-\h,0)circle[radius=.05]node[below]{$s$} --+(0,.7*\h) circle[radius=.05]node[right]{$r_2$};
        \draw[fill=black](a)--++(-2*\h,0)circle[radius=.05]node[above]{$r_1$};
    \end{tikzpicture}
$$
Take $h=h_{r_1}(-1)h_{r_2}(-1)h_{r_3}(-1)\in T^F$ and $g=x_s(u)\in R$ for some nonzero $u\in F_q$. Since $g^h=x_s(-u)$ and $\sigma(h)=h$,
the image of $h$ in $N_{{\cal G}^F}(R)/RC_{{\cal G}^F}(R)$ is a $\sigma$-fixed involution and $N_G(P)/PC_G(P)$ has even order.
In a word, $N_G(P)/PC_G(P)$ always has even order when $\Phi$ is type of $\rm D_\ell$ for $\ell \geq 4$.
That causes a contradiction.

Suppose that $\Phi$ is type of $\rm G_2$. In this case, $p=3$ and $q=3^{2m+1}$. We suppose that $\{a, b\}$ is a fundamental system of $\Phi$ with $b$ the short root. Any element of $R$ can be uniquely written as a product $x_a(u)x_b(u^\theta)x_{a+3b}(v)x_{a+b}(v^\theta-u^{1+\theta})x_{2a+3b}(w)x_{a+2b}(w^\theta-u^{1+2\theta})$, where $u, v, w\in F_{}$ and $\theta$ is a field automorphism on $F_{q}$ such that $3\theta^2=1$. Take $h=h_a(-1)h_b(-1)\in T^F$. If $h$ lies in $C_{{\cal G}^F}(R)$, we have $$\begin{array}{lll}
 g^h&=& x_a(-u)x_b(-u^\theta)x_{a+3b}(v)x_{a+b}(v^\theta-u^{1+\theta})x_{2a+3b}(-w)x_{a+2b}(-w^\theta+u^{1+2\theta})\\
   & = & x_a(u)x_b(u^\theta)x_{a+3b}(v)x_{a+b}(v^\theta-u^{1+\theta})x_{2a+3b}(w)x_{a+2b}(w^\theta-u^{1+2\theta})  .
\end{array}$$
Then by the uniqueness, we have $u=w=0$. That is impossible. So $h\in N_{{\cal G}^F}(R)-C_{{\cal G}^F}(R)$. Since $\sigma(h)=h$, the image of $h$ in $N_{{\cal G}^F}(R)/RC_{{\cal G}^F}(R)$ is a $\sigma$-fixed involution. Thus $N_G(P)/PC_G(P)$ has even order. That causes a contradiction.

Finally, suppose that $S={\rm D}_4(q)$ and $p=3$. We choose a simply connected simple algebraic group $\cal G$ with a Frobenius map $F: {\cal G}\rightarrow \cal G$ so that ${\cal G}^F/Z({\cal G}^F)\cong S$. In order to avoid repeat, we directly use the notation in the fourth paragraph in the proof to the algebraic group $\cal G$ here. We take the coxeter graph associated with $\Phi$ as in the 11th paragraph. Set $h=h_{r_1}(-1)h_{r_2}(-1)h_{r_3}(-1)\in T^F$ and $g=x_s(u)\in R$ for some nonzero $u\in F_q$.
Since $g^h=x_s(-u)$ and field and graph automorphisms on ${\cal G}^F$ fix $h$,
the image $\bar h$ of $h$ in $N_{{\cal G}^F}(R)/RC_{{\cal G}^F}(R)$ is an involution fixed by field and graph automorphisms on ${\cal G}^F$. The natural map ${\cal G}^F\rightarrow S$ induces a group isomorphism $N_{{\cal G}^F}(R)/RC_{{\cal G}^F}(R)\cong N_S(Q)/QC_S(Q)$, and field and graph automorphisms on $S$ can be lifted to field and graph automorphisms on ${\cal G}^F$. So the image $\tilde h$ of $\bar h$ in $N_S(Q)/QC_S(Q)$ is fixed by field and graph automorphisms on $S$.
Since $p=3$, by the structure of the automorphism group of $S$, we may assume that $G=S\rtimes \bar P$, where $\bar P$ is an automorphism subgroup of $S$ generated by suitable field and graph automorphisms.
Clearly $\tilde h$ lies inside $C_{N_S(Q)/QC_S(Q)}(\bar P)$. Then, as in the fourth paragraph, we use Lemma \ref{Extension} to prove that $N_G(P)/PC_G(P)$ has to be of even order. That causes a contradiction.  The proof is done.
\end{proof}

\noindent{\bf Proof of Theorem \ref{Almost}}. Suppose that $S$ is ${\rm B}_2(2)'$. In this case, $S$ is isomorphic to $A_6$ and Theorem \ref{Almost} follows from Lemma \ref{Alternating}.

Suppose that $S$ is ${\rm G}_2(2)'$. In this case, $S$ is isomorphic to $^2{\rm A}_2(3)$. Then Theorem \ref{Almost} follows from Lemma \ref{A} when $p\neq 3$ and from Lemma \ref{Defining} when $p=3$.

Suppose that $S$ is $^2{\rm G}_2(3)'$. In this case, $S$ is isomorphic to ${\rm PSL}_2(8)$ and then Theorem \ref{Almost} follows from Lemma \ref{A}.

Suppose that $S$ is $^2{\rm F}_4(2)'$. In this case, $G=S$ and $p$-Sylow automizers of $G$ have even order. So this case is excluded.

In the rest of the proof, we suppose that $S$ is not isomorphic to ${\rm B}_2(2)'$, ${\rm G}_2(2)'$, $^2{\rm G}_2(3)'$, or $^2{\rm F}_4(2)'$. Then Theorem \ref{Almost} follows from Corollary \ref{ADE} and Lemmas \ref{Alternating}-\ref{Defining}.


\end{document}